

\documentclass[11pt]{amsart} 

\usepackage[utf8]{inputenc} 
\usepackage{xcolor}


\usepackage{geometry} 
\geometry{a4paper} 

\usepackage{color,graphicx} 


\usepackage{amsmath}
\usepackage{amssymb}
\usepackage{amsthm}
\usepackage{chemarr}
\usepackage{esint}
\usepackage{cite}
\usepackage{booktabs} 
\usepackage{array} 
\usepackage{paralist} 
\usepackage{mathrsfs}
\usepackage{verbatim} 
\usepackage{dsfont}
\usepackage{mathtools}
\usepackage[yyyymmdd,hhmmss]{datetime}
\usepackage{subcaption}
\usepackage{fancyhdr} 
\pagestyle{fancy} 
\lhead{}\chead{}\rhead{}
\lfoot{}\cfoot{\thepage}\rfoot{}


\newtheorem{thm}{Theorem}[section]
\newtheorem{defi}[thm]{Definition}
\newtheorem{prop}[thm]{Proposition}
\newtheorem{lma}[thm]{Lemma}
\newtheorem{rem}[thm]{Remark}
\newtheorem{assump}[thm]{Assumption}

\newtheorem{notation}[thm]{Notation}
\bibliographystyle{plain}

\DeclareMathOperator{\diam}{diam}

\DeclareMathOperator{\size}{size}

\DeclareMathOperator{\dist}{dist}

\DeclareMathOperator{\conv}{conv}

\DeclareMathOperator{\diag}{diag}

\newcommand{\id}{\mathrm{id}}

\newcommand{\e}{\eps}
\newcommand{\weakstar}{\stackrel{\ast}{\rightharpoonup}}
\newcommand{\wsto}{\weakstar}

\renewcommand{\H}{{\mathscr{H}}}
\renewcommand{\d}{\mathrm{d}}

\newcommand{\Gr}{\mathrm{Gr}\,}
\renewcommand{\L}{\mathscr{L}}


\def\N{{\mathbb N}} 
\def\Z{{\mathbb Z}} 
\def\R{{\mathbb R}} 


\def\Hm{{\mathscr H}}
\def\cT {{\mathcal T}}
\def\M{{\mathcal M}}
\def\B{{\mathcal B}}
\def\l#1#2{l_{#1#2}}

\newcommand{\dd}{\; \mathrm{d}}
\newcommand{\eps}{\varepsilon}

\def\XXint#1#2#3{{\setbox0=\hbox{$#1{#2#3}{\int}$ }
\vcenter{\hbox{$#2#3$ }}\kern-.6\wd0}}


\parindent=0pt

\begin{document}

\title{Approximation of the Willmore energy by a discrete geometry model}
\date{\today} 
\author[P.~Gladbach]{Peter Gladbach}
\author[H.~Olbermann]{Heiner Olbermann} 
\address[Peter Gladbach]{Institut f\"ur Angewandte Mathematik, Universit\"at Bonn, 53115 Bonn, Germany}
\address[Heiner Olbermann]{Institut de Recherche en Math\'ematique et Physique, UCLouvain, 1348 Louvain-la-Neuve, Belgium}
\email[Peter Gladbach]{gladbach@iam.uni-bonn.de}
\email[Heiner Olbermann]{heiner.olbermann@uclouvain.be}

\begin{abstract}
  We prove that a certain discrete energy for triangulated  surfaces, defined in the spirit of discrete differential geometry, converges to the Willmore energy in the sense of $\Gamma$-convergence. Variants of this discrete energy have been discussed before in the computer graphics literature. 
\end{abstract}

\maketitle

\section{Introduction}

The numerical analysis of elastic shells  is a
vast field with important applications in physics and engineering. In most
cases, it is carried out via the finite element method. In the physics and computer
graphics literature, there have been suggestions to use simpler methods  based
on discrete differential geometry \cite{meyer2003discrete,bobenko2008discrete}.  Discrete differential geometry of surfaces is the study of triangulated polyhedral surfaces. (The epithet ``simpler'' has to be understood as ``easier to implement''.) We mention in passing that models based on triangulated polyhedral surfaces have applications in materials science beyond the elasticity of thin shells. E.g., recently these models have been used to describe  defects in nematic liquids on thin shells \cite{canevari2018defects}. This amounts to a generalization to arbitrary surfaces of the discrete-to-continuum analysis for the XY model in two dimensions that leads to  Ginzburg-Landau type models in the continuum limit \cite{MR2505362,alicandro2014metastability}.

\medskip

Let us describe some of the methods mentioned above in  more detail.
Firstly, there are the so-called
\emph{polyhedral membrane models} which in fact can be used for a whole
array of physical and engineering problems (see e.g.~the review
\cite{davini1998relaxed}). In the context of plates and shells,  the
so-called Seung-Nelson model \cite{PhysRevA.38.1005} is widely used.
This associates membrane and bending energy to a piecewise affine map $y:\R^2\supset U\to \R^3$, where the pieces are determined by a triangulation $\mathcal T$ of the polyhedral domain $U$. The bending energy is given by
  \begin{equation}
E^{\mathrm{SN}}(y)=  \sum_{K,L} |n(K)-n(L)|^2\,,\label{eq:1}
  \end{equation}
where the sum runs over those unordered pairs of triangles $K,L$ in $\mathcal T$  that share an edge, and $n(K)$ is the surface normal on the triangle $K$. In \cite{PhysRevA.38.1005}, it has been argued that for a fixed
limit deformation $y$, the energy \eqref{eq:1} should approximate the Willmore energy
  \begin{equation}
  E^{\mathrm{W}}(y)=\int_{y(U)} |Dn|^2\dd\Hm^2\label{eq:2}
  \end{equation}
when the grid size of the triangulation $\mathcal T$ is sent to 0, and the argument of the discrete energy \eqref{eq:1} approximates the (smooth) map $y$. In \eqref{eq:2} above, $n$ denotes the surface normal and $\H^2$ the two-dimensional Hausdorff measure.
These statements have been made more precise in \cite{schmidt2012universal}, where it has been shown that the result of the limiting process depends on the  used triangulations. In particular, the following has been shown in this reference: For $j\in\N$, let $\mathcal T_j$ be a triangulation of $U$ consisting of equilateral triangles such that one of the sides of each triangle is parallel to the $x_1$-direction, and such that the triangle size tends 0 as $j\to\infty$. Then the limit energy reads
\[
  \begin{split}
  E^{\mathrm{FS}}(y)=\frac{2}{\sqrt{3}}\int_U &\big(g_{11}(h_{11}^2+2h_{12}^2-2h_{11}h_{22}+3h_{22}^2)\\
  &-8g_{12}h_{11}h_{12}+2 g_{22}(h_{11}^2+3h_{12}^2)\big)(\det g_{ij})^{-1}\dd x\,,
\end{split}
\]
where
\[
  \begin{split}
  g_{ij}&=\partial_i y\cdot\partial_j y\\
  h_{ij}&=n\cdot \partial_{ij} y \,.
\end{split}
\]
More precisely, if $y\in C^2(U)$ is given, then the sequence of maps $y_j$ obtained by piecewise affine interpolation of the values of $y$ on the vertices of the triangulations $\mathcal T_j$ satisfies
\[
  \lim_{j\to \infty}E^{\mathrm{SN}}(y_j)=E^{\mathrm{FS}}(y)\,.
  \]

Secondly, there is the more recent approach to using discrete differential
geometry for shells pioneered by Grinspun et al.~\cite{grinspun2003discrete}.
Their energy does not depend on an immersion $y$ as above, but is defined directly on  triangulated surfaces. Given such a surface   $\mathcal T$, the energy is given by

  \begin{equation}
  E^{\mathrm{GHDS}}(\mathcal T)=\sum_{K,L}  \frac{l_{KL}}{d_{KL}} \alpha_{KL}^2\label{eq:3}
  \end{equation}

  where the sum runs over unordered pairs of neighboring triangles $K,L\in\mathcal T$, $l_{KL}$ is the length of the interface between  $K,L$, $d_{KL}$ is the distance between the centers of the circumcircles of $K,L$, and $\alpha_{KL}$ is the difference of the angle between $K,L$ and $\pi$, or alternatively the angle between the like-oriented normals $n(K)$ and $n(L)$, i.e. the \emph{dihedral angle}.

In \cite{bobenko2005conformal}, Bobenko has defined an energy for piecewise affine surfaces $\mathcal T$ that is invariant under conformal transformations. It is defined via the circumcircles of triangles in $\mathcal T$, and the external intersection angles of circumcircles of neighboring triangles. Denoting this intersection angle for neighboring triangles $K,L$ by $\beta_{KL}$, the energy reads
  \begin{equation}\label{eq:4}
  E^\mathrm{B} (\mathcal T) = \sum_{K,L}\beta_{KL}-\pi\, \#\,\text{Vertices}(\mathcal T)\,.
\end{equation}
Here $\text{Vertices}(\mathcal T)$ denotes the vertices of the triangulation $\mathcal T$, the sum is again over nearest neighbors.
It has been shown in \cite{bobenko2008surfaces} that this energy is the same as
\eqref{eq:3} up to terms that vanish as the size of triangles is sent to zero
(assuming sufficient smoothness of the limiting surface). The reference
\cite{bobenko2008surfaces} also contains an analysis of the energy for this
limit. If the limit surface is smooth, and it is approximated by triangulated
 surfaces $\mathcal T_\e$ with maximal triangle size $\e$  that satisfy
a number of technical assumptions, then the Willmore energy of the limit surface
is smaller than or equal to the limit of the energies \eqref{eq:3} for the approximating surfaces, see Theorem 2.12 in \cite{bobenko2008surfaces}. The technical assumptions are
\begin{itemize}
\item each vertex in the triangulation $\mathcal T_\e$ is connected to six other vertices by edges,
  \item the lengths of the sides of the hexagon formed by six triangles that share one vertex differ by at most $O(\e^4)$,
\item neighboring triangles are congruent up to $O(\e^3)$.
  \end{itemize}
  Furthermore, it is stated that the limit is achieved if additionally the triangulation approximates a ``curvature line net''.

  \medskip

  The purpose of this present paper is to  generalize this convergence result, and put it into the framework of $\Gamma$-convergence \cite{MR1968440,MR1201152}. Instead of fixing the vertices of the polyhedral surfaces to lie on the limiting surfaces, we are going to assume that the convergence is weakly * in $W^{1,\infty}$ as graphs. This approach allows to completely drop the assumptions on the connectivity of vertices in the triangulations, and the assumptions of congruence -- we only need to require a certain type of regularity of the triangulations that prevents the formation of small angles. %

\medskip
  
  We are going to work with the energy
    \begin{equation}\label{eq:5}
    E(\mathcal T)=\sum_{K,L} \frac{l_{KL}}{d_{KL}} |n(K)-n(L)|^2\,,
  \end{equation}

  which in a certain sense  is equivalent to \eqref{eq:3} and \eqref{eq:4} in the limit of vanishing triangle size, see  the arguments from \cite{bobenko2008surfaces} and Remark \ref{rem:main} (ii) below.
  
 \medskip

 To put this approach into its context in the mathematical literature, we point out that it is another instance of a discrete-to-continuum limit, which has been a popular topic in mathematical analysis over the last few decades. We mention the seminal papers \cite{MR1933632,alicandro2004general} and the fact that  a variety of physical settings have been approached in this vein, such as spin and lattice systems \cite{MR1900933,MR2505362},  bulk elasticity \cite{MR2796134,MR3180690}, thin films \cite{MR2429532,MR2434899}, magnetism \cite{MR2186037,MR2505364}, and many more.
 
\medskip

  The topology that we are going to use in our $\Gamma$-convergence statement is much coarser  than the one that corresponds to Bobenko's convergence result; however it is not the ``natural'' one that would yield compactness from finiteness of the energy \eqref{eq:5} alone. For a discussion of why we do not choose the latter see Remark \ref{rem:main} (i) below. Our topology is instead defined as follows:

Let $M$ be some fixed compact oriented two-dimensional $C^\infty$ submanifold of
$\R^3$ with normal $n_M:M\to S^2$.  Let $h_j\in W^{1,\infty}(M)$, $j=1,2,\dots$,
such that $\|h_j\|_{W^{1,\infty}}<C$ and $\|h_j\|_{\infty}<\delta(M)/2$ (where $\delta(M)$ is the \emph{radius of injectivity} of $M$, see Definition \ref{def:radius_injectivity} below) such that $
T_j:= \{x+h_j(x)n_M(x):x\in M\}$ are triangulated  surfaces (see Definition \ref{def:triangular_surface} below). We say
$\mathcal T_j\to \mathcal S:=\{x+h(x)n_M(x):x\in M\}$ if $h_j\to h$ in
$W^{1,p}(M)$ for all $1\leq p<\infty$. Our main theorem, Theorem \ref{thm:main}
below, is a $\Gamma$-convergence result in this topology.
The regularity assumptions that we impose on the triangulated surfaces under
considerations are  ``$\zeta$-regularity'' and the ``Delaunay property''. The
definition of these concepts can be found in Definition
\ref{def:triangular_surface} below.

  \begin{thm}
\label{thm:main}    
\begin{itemize}
  \item[(o)] Compactness: Let $\zeta>0$, and let $h_j$ be a bounded sequence in
    $W^{1,\infty}(M)$ such that $\mathcal T_j=\{x+h_j(x)n_M(x):x\in M\}$ is a
    $\zeta$-regular triangulated  surface and $\|h_j\|_\infty\leq\delta(M)/2$ for $j\in \N$ with $\limsup_{j\to\infty}E(\mathcal T_j)<\infty$. Then there exists a subsequence $h_{j_k}$ and $h\in W^{2,2}(M)$ such that $h_{j_k}\to h $ in $W^{1,p}(M)$ for every $1\leq p < \infty$.

  \item[(i)] Lower bound: Let $\zeta>0$. Assume that for $j\in\N$, $h_j\in W^{1,\infty}(M)$ with $\|h_j\|\leq \delta(M)/2$,  $\mathcal T_j:=\{x+h_j(x)n_M(x):x\in M\}$ is a 
    $\zeta$-regular triangulated  surface fulfilling the Delaunay
    property, and that $\mathcal T_j\to S=\{x+h(x)n_M(x):x\in M\}$ for $j\to\infty$. Then 
         \[
        \liminf_{j\to\infty} E(\mathcal T_j)\geq \int_{S} |Dn_S|^2\dd\H^2\,.
      \]
    \item[(ii)] Upper bound: Let $h\in W^{1,\infty}(M)$ with $\|h\|_\infty\leq \delta(M)/2$ and
      $S=\{x+h(x)n_M(x):x\in M\}$. Then there exists $\zeta>0$ and a sequence $(h_j)_{j\in\N}\subset W^{1,\infty}(M)$ such that $\mathcal T_j:=\{(x+h_j(x)n_M(x):x\in M\}$ is a 
      $\zeta$-regular triangulated  surface satisfying the
      Delaunay property for each $j\in \N$, and we have $\mathcal T_j\to S$ for $j\to \infty$ and 
        \[
        \lim_{j\to\infty} E(\mathcal T_j)= \int_{S} |Dn_S|^2\dd\H^2\,.
      \]
    \end{itemize}
  \end{thm}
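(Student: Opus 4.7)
The plan is to work locally in coordinate charts on $M$: about each point of $M$ we may write $M$ itself as a smooth graph over its tangent plane, and then the triangulated surfaces $\mathcal T_j$ become graphs of piecewise affine functions over (an adapted triangulation of) this planar chart. In these coordinates $n(K)$ is an explicit algebraic function of the three vertex heights, and $n(K)-n(L)$ across a shared edge becomes, to leading order in $\|\nabla u_j\|_\infty$, a linear function of the gradient jump $\nabla u_j|_L-\nabla u_j|_K$, where $u_j(x)=x+h_j(x)n_M(x)$.

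The central geometric identity I would exploit is the Voronoi/kite formula for Delaunay pairs: for neighboring triangles $K,L$ with shared edge $e$ of length $l_{KL}$ and circumcenters $c_K,c_L$, the vector $c_L-c_K$ lies on the perpendicular bisector of $e$ with $|c_L-c_K|=d_{KL}$, while $\tfrac12 l_{KL}d_{KL}$ is the area of the kite $c_K v_1 c_L v_2$; moreover these kites partition the underlying domain. Taylor expansion of a smooth $u$ gives
\[
\nabla u|_L-\nabla u|_K \;\approx\; D^2 u\,(c_L-c_K) \;=\; \pm d_{KL}\,D^2u\,\nu_e,
\]
so that
\[
\frac{l_{KL}}{d_{KL}}\,|n(K)-n(L)|^2 \;\approx\; 2\,(\text{kite area})\cdot|A_S\,\nu_e|^2
\]
where $A_S = Dn_S$ is the shape operator. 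Summed over edges this is a Riemann sum against the kite partition; $\zeta$-regularity and the Delaunay property force the edge-normal distribution to be asymptotically isotropic at each point, and the identity $\int_{S^1}|A\nu|^2\,d\nu/(2\pi)=\tfrac12|A|_F^2$ combined with the factor $2$ from the kites recovers $\int_S|Dn_S|^2\,d\H^2$ in the limit.

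With this identification in hand, I would attack the three parts as follows. For (ii), on smooth $h$, pull back a fine equilateral triangulation from a Euclidean chart to $M$: equilateral triangulations are Delaunay and $\zeta$-regular, both properties are stable under small smooth perturbations, so lifting through $x\mapsto x+h(x)n_M(x)$ produces admissible $\mathcal T_j$ whose energies converge to the Willmore integral by the pointwise identity above. General $h\in W^{1,\infty}$ with finite right-hand side is then handled by mollification in charts and a diagonal argument. For the compactness (o), Rellich provides a subsequential strong $W^{1,p}$ limit $h\in W^{1,\infty}$; the pointwise identification turns the bound $E(\mathcal T_j)\le C$ into a uniform $L^2$-bound on a discretized Hessian of $h_j$, which gives $h\in W^{2,2}$ via weak compactness. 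For the lower bound (i), the same identification, viewed as a lower semicontinuity statement for a positive quadratic form in the gradient jumps, combined with strong $W^{1,p}$ convergence of $h_j$, yields the liminf inequality by testing against a dense family of smooth competitors.

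The main obstacle will be making the Riemann-sum identification rigorous under only the stated regularity. Two difficulties compound: (a) $h_j$ is merely $W^{1,\infty}$ and the limit only $W^{2,2}$, so one cannot Taylor-expand in the classical sense and must instead introduce a mollification parameter $\eta$, pass to the limit in $j$ first, and send $\eta\to 0$ at the end, using monotonicity of the Willmore integral under approximation; (b) the Delaunay and $\zeta$-regularity hypotheses yield \emph{asymptotic} isotropy of the edge-normal distribution only after spatial averaging at an intermediate scale, which forces a two-scale or Young-measure argument to identify the limit of the empirical measure of edge directions on the kite tiling. Both are ultimately handled by exploiting the convexity and nonnegativity of the quadratic form in $D^2 h$ that emerges after averaging, which by isotropy is bounded below by a positive multiple of $|D^2 h|^2$, thus giving the needed lower semicontinuity against any weak limit.
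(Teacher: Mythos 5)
The central claim on which your proposal rests—that $\zeta$-regularity together with the Delaunay property forces the empirical edge-normal distribution to become asymptotically isotropic, so that the kite-tiling Riemann sum automatically identifies the limit energy with $\int_S|Dn_S|^2$—is false, and the paper's own results contradict it. A $\zeta$-regular Delaunay triangulation can be highly anisotropic: the recovery sequence constructed in Proposition~\ref{prop: local triangulation} is a sheared grid $X = \{(\e k + \theta\e l, \e l, h(\cdot))\}$ with shear parameter $\theta$ eventually sent to $0$, deliberately aligned with the principal curvature directions and as far from isotropic as $\zeta$-regularity allows. The reason such a degenerate construction is needed is that for generic (in particular, isotropic) triangulations the discrete energy converges to something \emph{strictly larger} than the Willmore energy. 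Concretely, for equilateral triangles one has $l_{KL}/d_{KL}\equiv\sqrt3$, so that $E(\cT)=\sqrt3\,E^{\mathrm{SN}}(\cT)$, and the paper recalls that $E^{\mathrm{SN}}\to E^{\mathrm{FS}}\neq E^{\mathrm{W}}$. Thus your proposed recovery sequence for part (ii)—lifting equilateral triangulations, which you assert "converge to the Willmore integral by the pointwise identity"—does not attain the upper bound; it leaves a gap that cannot be closed by mollification or diagonalization. The upper bound proof really does require the two-parameter construction with a curvature-aligned, asymptotically degenerate grid, assembled globally through the protected-point-set machinery of Proposition~\ref{prop: Delaunay existence}.

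A second, independent gap is the Taylor identity $\nabla u|_L - \nabla u|_K \approx D^2u\,(c_L-c_K) = \pm d_{KL}\,D^2u\,\nu_e$ that you use to convert the discrete energy into $2\cdot(\text{kite area})\cdot|A\nu_e|^2$. This is not correct: the gradient jump of the piecewise affine interpolant across a shared edge depends on the positions of the two opposite vertices, not only on the circumcenter displacement. A direct check on two equilateral triangles sharing the edge $[(0,0),(1,0)]$ with $h(x,y)=\tfrac12 x^2$ gives a nonzero jump $(0,\tfrac{1}{2\sqrt3})$ even though $D^2h\,\nu_e = 0$, so the claimed leading-order identity already fails on the simplest configuration. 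As a consequence, the per-edge density is not $l_e d_e |A\nu_e|^2$ and the "kite partition" bookkeeping does not produce the Willmore integrand, even before the isotropy issue arises.

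Finally, the proof structure for (o) and (i) in the paper is genuinely different and does not pass through any Riemann-sum identification or Young-measure averaging. The key tool is Lemma~\ref{lem:CS_trick}: a Cauchy--Schwarz inequality for the squared finite differences $|g(x+v)-g(x)|^2$ of a piecewise constant function, weighted so that the discrete energy appears on one side, together with a telescoping bound $\sum_i (q(\bar K_{i+1})-q(\bar K_i))\cdot \bar v/|\bar v| = (q(\bar K_N)-q(\bar K_0))\cdot\bar v/|\bar v|$ for the circumcenter displacements along the segment $[x,x+v]$. It is exactly here that the Delaunay property enters (it makes these projections effectively monotone; see \eqref{eq:12} and \eqref{eq:16}), and the blow-up at Lebesgue points of $\nabla\tilde h$ and $\nabla N$ in Proposition~\ref{prop:lower_blowup}(ii) reduces the liminf to the affine case $h(x)=w\cdot x$, $g(x)=u\cdot x$, giving the lower bound with the correct constant $u^T(\mathds{1}+w\otimes w)^{-1}u\sqrt{1+|w|^2}$. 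Your proposal for (o) and (i) could in principle be repaired if the claimed identity were replaced with a one-sided estimate in the spirit of this Cauchy--Schwarz argument, but as written it stands on the isotropy claim and is not correct.
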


  \begin{rem}\label{rem:main}
    \begin{itemize}
\item[(i)] We are not able to derive a convergence result in a topology that
  yields convergence from boundedness of the energy \eqref{eq:5} alone. Such an
  approach would necessitate the interpretation of the surfaces as varifolds or
  currents. To the best of our knowledge, the theory of 
  integral functionals on varifolds (see e.g.~\cite{menne2014weakly,hutchinson1986second,MR1412686})  is not
  developed to the point to allow for a treatment of this question. In particular, there does not exist a sufficiently
  general theory of lower semicontinuity of integral functionals for varifold-function pairs.
\item[(ii)] We can state
  analogous results based on the energy functionals \eqref{eq:3},
  \eqref{eq:4}. To do so, our proofs only need to be modified slightly: As soon as we have reduced the situation to the graph case
  (which we do by assumption), the upper bound construction can be carried out
  as here; the smallness of the involved dihedral angles assures that the
  arguments from \cite{bobenko2005conformal} suffice to carry through the proof.
    Concerning the lower bound, we also reduce to the case of small dihedral angles by a blow-up procedure around Lebesgue points of the derivative of the surface normal of the limit surface. (Additionally, one can show smallness of the contribution of a few pairs of triangles whose dihedral angle is not small.) Again, the considerations from \cite{bobenko2005conformal} allow for a translation of our proof to the case of the  energy functionals \eqref{eq:3},
    \eqref{eq:4}.
  \item[(iii)] As we will show in Chapter \ref{sec:necess-dela-prop}, we need to require the Delaunay property in order to obtain the lower bound statement. Without this requirement, we will show that a hollow cylinder can be approximated by triangulated surfaces with arbitrarily low energy, see Proposition~\ref{prop: optimal grid}.
    \item[(iv)] Much more general approximations of surfaces by discrete geometrical objects have recently been proposed in \cite{buet2017varifold,buet2018discretization,buet2019weak}, based on tools from the theory of  varifolds.  
    \end{itemize}
  \end{rem}

  \subsection*{Plan of the paper}
  In Section \ref{sec:defin-prel}, we will fix definitions and make some preliminary observations on triangulated surfaces. The proofs of the compactness and lower bound part will be developed in parallel in Section \ref{sec:proof-comp-lower}. The upper bound construction is carried out in Section \ref{sec:surf-triang-upper}, and in Section \ref{sec:necess-dela-prop} we demonstrate that the requirement of the Delaunay property is necessary in order to obtain the lower bound statement.

\section{Definitions and preliminaries}
\label{sec:defin-prel}
\subsection{Some general notation}
  \begin{notation}
    For a two-dimensional submanifold $M\subset\R^3$, the tangent space of $M$ in $x\in M$ is
    denoted by $T_{x}M$. For functions $f:M\to\R$, we denote their gradient by $\nabla f\in T_xM$; the norm $|\cdot|$ on $T_xM\subset\R^3$ is the Euclidean norm inherited from $\R^3$. For $1\leq p\leq \infty$, we denote by $W^{1,p}(M)$ the space of functions $f\in L^p(M)$ such that $\nabla f\in L^p(M;\R^3)$, with norm
    \[
      \|h\|_{W^{1,p}(M)}=\|f\|_{L^p(M)}+\|\nabla f\|_{L^p(M)}\,.
      \]

      For $U\subset\R^n$ and a function
    $f:U\to\R$, we denote the graph of $f$ by
    \[
      \Gr f=\{(x,f(x)):x\in U\}\subset\R^{n+1}\,.
    \]
    For $x_1,\dots,x_m\subset \R^k$, the convex hull of $\{x_1,\dots,x_m\}$ is
    denoted by
    \[
      [x_1,\dots,x_m]=\left\{\sum_{i=1}^m \lambda_ix_i:\lambda_i\in [0,1] \text{
          for } i=1,\dots,m, \, \sum_{i=1}^m\lambda_i=1\right\}\,.
    \]
    We will identify $\R^2$ with the subspace $\R^2\times\{0\}$ of $\R^3$. The $d-$dimensional Hausdorff measure is denoted by $\H^d$, the $k-$dimensional Lebesgue measure by $\L^k$.  The
    symbol ``$C$'' will be used as follows: A statement such as
    ``$f\leq C(\alpha)g$'' is shorthand for ``there exists a constant $C>0$ that
    only depends on $\alpha$ such that $f\leq Cg$''. The value of $C$ may change
    within the same line.  For $f\leq C g$, we also write
    $f\lesssim g$.
  \end{notation}

\subsection{Triangulated surfaces: Definitions}
\begin{defi}
\label{def:triangular_surface}
  \begin{itemize}
  \item [(i)] A \textbf{triangle} is the convex hull $[x,y,z]\subset \R^3$ of three points $x,y,z \in \R^3$. A \textbf{regular} triangle is one where $x,y,z$ are not colinear, or equivalently $\Hm^2([x,y,z])>0$.
  \item[(ii)] 
A \textbf{triangulated surface} is a finite collection
    $\cT = \{K_i\,:\,i = 1,\ldots, N\}$ of regular triangles
    $K_i = [x_i,y_i,z_i] \subset \R^3$ so that
    $\bigcup_{i=1}^N K_i \subset \R^3$ is a topological two-dimensional manifold
    with boundary; and the intersection of two different triangles $K,L\in \cT$ is either empty, a common vertex, or a common edge.
    
     We identify $\cT$ with its induced topological manifold
    $\bigcup_{i=1}^N K_i \subset \R^3$ whenever convenient. We say that $\cT$ is \textbf{flat} if
    there exists an affine subplane of $\R^3$ that contains $\cT$.


\item[(iii)]    The \textbf{size} of the triangulated surface, denoted $\size(\cT)$, is the
    maximum diameter of all its triangles.

   \item[(iv)] The triangulated surface $\cT$ is called $\zeta$\textbf{-regular}, with
    $\zeta > 0$, if the minimum angle in all triangles is at least $\zeta$ and
    $\min_{K\in \cT} \diam(K) \geq \zeta \size(\cT)$.

    \item[(v)] The triangulated surface satisfies the  \textbf{Delaunay}
      property if for every triangle
    $K = [x,y,z] \in \cT$ the following property holds: Let $B(q,r)\subset \R^3$ be the
    smallest ball such that $\{x,y,z\}\subset \partial{B(q,r)}$. Then $B(q,r)$ contains
    no vertex of any triangle in $\cT$. The point $q = q(K)\in \R^3$ is called
    the \textbf{circumcenter} of $K$, $\overline{B(q,r)}$ its
    \textbf{circumball} with circumradius $r(K)$, and $\partial B(q,r)$ its \textbf{circumsphere}.
  \end{itemize}

\end{defi}

 Note that triangulated surfaces have normals defined on all triangles and are compact
 and rectifiable.  For the argument of the circumcenter map
 $q$, we do not distinguish between triples of points $(a,b,c)\in \R^{3\times
   3}$ and the triangle $[a,b,c]$ (presuming $[a,b,c]$ is a regular triangle).

 \begin{notation}
   If $\cT=\{K_i:i=1,\dots,N\}$ is a triangulated surface, and
      $g:\cT\to \R$,
      then we identify $g$ with the function 
      $\cup_{i=1}^N K_i\to \R$ that is
      constant on the (relative) interior of each triangle $K$, and equal to
      $0$ on $K\cap L$ for $K\neq L\in \cT$. In particular we may write in this case
      $g(x)=g(K)$ for $x\in \mathrm{int}\, K$.
 \end{notation}

 \begin{defi}
   Let $\cT$ be a triangulated surface and  $K,L \in \cT$. We set
   \[
     \begin{split}
       \l{K}{L} &:= \H^1(K\cap L)\\
       d_{KL} &:= |q(K) - q(L)|
     \end{split}
   \]
   If $K,L$ are \textbf{adjacent}, i.e. if $\l{K}{L} > 0$, we may define $|n(K) - n(L)|\in \R$ as the norm of the difference of the normals $n(K),n(L)\in S^2$ which share an orientation, i.e. $2\sin \frac{\alpha_{KL}}{2}$, where $\alpha_{KL}$ is the dihedral angle between the triangles, see Figure \ref{fig:dihedral}. The discrete bending energy is then defined as
\[
E(\cT) = \sum_{K,L\in \cT} \frac{\l{K}{L}}{d_{KL}} |n(K) - n(L)|^2.
\]

Here, the sum runs over all unordered pairs of triangles. If $|n(K) - n(L)| = 0$ or $\l{K}{L} = 0$, the energy density is defined to be $0$ even if $d_{KL}=0$. If $|n(K) - n(L)| > 0$, $\l{K}{L} > 0$ and $d_{KL} = 0$, the energy is defined to be infinite.
\end{defi}

  \begin{figure}[h]
\begin{subfigure}{.45\textwidth}
  \begin{center}
    \includegraphics[height=5cm]{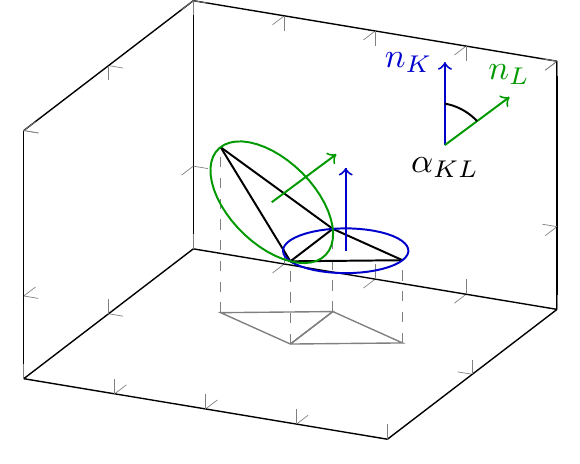}
  \end{center}
  \caption{ \label{fig:dihedral}}
\end{subfigure}
\hspace{5mm}
\begin{subfigure}{.45\textwidth}
\includegraphics[height=5cm]{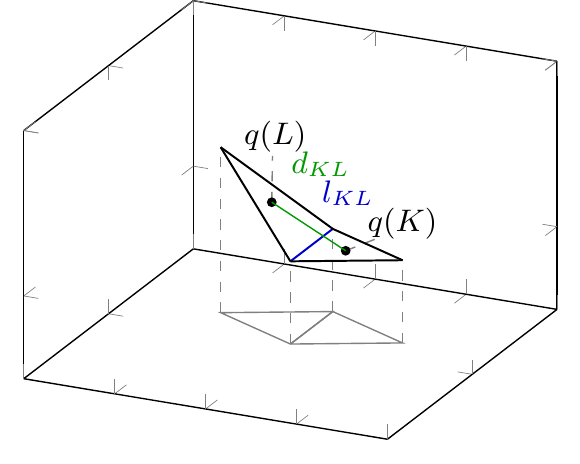}
\caption{\label{fig:dkllkl}} 
\end{subfigure}
\caption{($\mathrm{A}$) The dihedral angle $\alpha_{KL}$ for triangles $K,L$. It is related to the norm of the difference between the normals via $|n(K)-n(L)|=2\sin\frac{\alpha_{KL}}{2}$. ($\mathrm{B}$) Definitions of $d_{KL}$, $l_{KL}$.}
\end{figure}

\begin{notation}
  \label{not:thetaKL}
Let $H$ be an affine subplane of $\R^3$.
      For triangles $K,L\subset H$ that share an edge and $v\in\R^3$ parallel to
      $H$,  we define the function
      $\mathds{1}^v_{KL}:H \to \{0,1\}$ as $\mathds{1}_{KL}^v(x) = 1$ if and
      only if
      $[x,x+v]\cap (K\cap L) \neq \emptyset$. If the intersection $K\cap L$ does
      not consist of a single edge, then $\mathds{1}_{KL}^v\equiv
      0$. Furthermore, we let $\nu_{KL}\in \R^3$ denote the unit vector parallel to $H$
      orthogonal to the shared edge of $K,L$ pointing from $K$ to $L$ and
      \[
        \theta_{KL}^v=\frac{|\nu_{KL}\cdot v|}{|v|}\,.
        \]
    \end{notation}

See Figure \ref{fig:parallelogram} for an illustration of Notation \ref{not:thetaKL}.

\begin{figure}
\includegraphics[height=5cm]{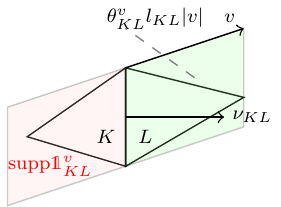}
\caption{Definition of $\theta_{KL}^v$: The parallelogram spanned by $v$ and the shared side $K\cap L$  has area $\theta^v_{KL}l_{KL}|v|$. This parallelogram translated by $-v$ is the support of $\mathds{1}_{KL}^v$. \label{fig:parallelogram}} 
\end{figure}

\medskip

We collect the notation that we have introduced for triangles and triangulated
surfaces for the reader's convenience in abbreviated form: Assume that  $K=[a,b,c]$ and $L=[b,c,d]$
are two regular triangles in $\R^3$. Then we have the following notation:

\begin{equation*}
\boxed{
  \begin{split}
    q(K)&: \text{ center of the smallest circumball for $K$}\\
    r(K)& :\text{ radius of the smallest circumball for $K$}\\
    d_{KL}&=|q(K)-q(L)|\\
    l_{KL}&:\text{ length of the shared edge of $K,L$}\\
 n(K)&: \text{ unit vector
      normal to $K$ }
  \end{split}
}
\end{equation*}

The following are defined if $K,L$
  are contained in an affine subspace $H$ of $\R^3$, and $v$ is a vector
  parallel to $H$:
\begin{equation*}
\boxed{
\begin{split}
\nu_{KL}&:\text{ unit vector  parallel to $H$
  orthogonal to}\\&\quad\text{ the shared edge of $K,L$ pointing from $K$ to $L$}\\ 
\theta_{KL}^v&=\frac{|\nu_{KL}\cdot v|}{|v|}\\
 \mathds{1}_{KL}^v&: \text{ function defined on $H$, with value one if}\\
 &\quad\text{  $[x,x+v]\cap (K\cap L)\neq \emptyset$, zero otherwise}
\end{split}
}
\end{equation*}

\subsection{Triangulated surfaces: Some preliminary observations}

For two adjacent triangles $K,L\in \cT$, we have $d_{KL} = 0$ if and only if the vertices of $K$ and $L$ have the same circumsphere. The following lemma states that for noncospherical configurations, $d_{KL}$ grows linearly with the distance between the circumsphere of $K$ and the opposite vertex in $L$.

\begin{lma}\label{lma: circumcenter regularity}
The circumcenter map $q:\R^{3\times 3} \to \R^3$ is $C^1$ and Lipschitz when
restricted to $\zeta$-regular triangles. For two adjacent triangles $K =
[x,y,z]$, $L = [x,y,p]$, we have that
\[d_{KL} \geq \frac12 \big| |q(K)-p| -r(K) \big|\,.
  \]
\end{lma}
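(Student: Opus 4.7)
The plan is to prove the two claims separately. For the regularity of $q$, I would start from an explicit formula: writing $K = [x,y,z]$, the circumcenter lies in the affine plane of $K$, so $q(K) = x + \alpha(y-x) + \beta(z-x)$ with $(\alpha, \beta)$ determined by the equations $|q(K)-x|^2 = |q(K)-y|^2$ and $|q(K)-x|^2 = |q(K)-z|^2$. These reduce to the $2\times 2$ linear system
\[
  \begin{pmatrix} |y-x|^2 & (y-x)\cdot(z-x) \\ (y-x)\cdot(z-x) & |z-x|^2 \end{pmatrix}
  \begin{pmatrix} \alpha \\ \beta \end{pmatrix}
  = \frac{1}{2}\begin{pmatrix} |y-x|^2 \\ |z-x|^2 \end{pmatrix},
\]
whose determinant equals $|(y-x)\times(z-x)|^2 = 4\,\H^2(K)^2$. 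For $\zeta$-regular triangles this is bounded below by $c(\zeta)\,\diam(K)^4 > 0$, so Cramer's rule represents $q$ as a smooth rational function of the vertices on the open set of regular triangles, which yields the $C^1$ claim. For the Lipschitz property I would exploit the scaling identity $q(\lambda x,\lambda y,\lambda z) = \lambda\, q(x,y,z)$: differentiating in $\lambda$ yields $Dq(\lambda T) = Dq(T)$, so $Dq$ is scale-invariant, and on the space of $\zeta$-regular triangles of unit diameter (compact modulo translation) it is uniformly bounded by continuity. A short path-connectedness argument inside the $\zeta$-regular set upgrades this to uniform Lipschitz continuity with a constant depending only on $\zeta$.

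For the inequality $d_{KL} \geq \tfrac{1}{2}\bigl||q(K)-p| - r(K)\bigr|$, the plan is to use only the defining property of $q(L)$, namely $|q(L)-x| = |q(L)-p| = r(L)$, combined with the triangle inequality applied through $q(L)$:
\[
  |q(K)-p| \leq |q(K)-q(L)| + |q(L)-p| = d_{KL} + r(L),
\]
\[
  r(K) = |q(K)-x| \geq |q(L)-x| - |q(K)-q(L)| = r(L) - d_{KL}.
\]
Subtracting yields $|q(K)-p| - r(K) \leq 2\,d_{KL}$. A symmetric pair of estimates (upper-bounding $r(K)$ and lower-bounding $|q(K)-p|$ through $q(L)$) gives $r(K) - |q(K)-p| \leq 2\,d_{KL}$, which together establish the absolute-value bound.

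I do not anticipate a significant obstacle. The inequality is essentially immediate from the triangle inequality, while the regularity claim reduces to explicit linear algebra together with the scaling argument above. The only mildly delicate point is the uniform Lipschitz statement on the non-compact set of $\zeta$-regular triangles, but the scale-invariance of $Dq$ combined with the connectedness of the $\zeta$-regular set makes this routine.
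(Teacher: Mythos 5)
Your proposal is correct and follows essentially the same route as the paper: both establish $C^1$ regularity by writing $q$ as the solution of an explicit linear system (the paper uses a $3\times 3$ system for $q$ directly, you a $2\times 2$ system in in-plane coordinates, with the same determinant $|(y-x)\times(z-x)|^2$), both reduce the Lipschitz claim to $1$-homogeneity of $q$, and both obtain the distance inequality by applying the triangle inequality through $q(L)$ together with $|q(L)-x|=|q(L)-p|=r(L)$. Your elaboration of the Lipschitz step (scale-invariance of $Dq$ plus compactness of unit-diameter $\zeta$-regular triangles) just makes explicit what the paper leaves as a one-line remark.
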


\begin{proof}
The circumcenter $q = q(K)\in \R^3$ of the triangle $K = [x,y,z]$ is the solution to the linear system
\begin{equation}
\begin{cases}
(q - x)\cdot (y-x) = \frac12 |y-x|^2\\
(q - x)\cdot (z-x) = \frac12 |z-x|^2\\
(q - x)\cdot ((z-x)\times (y-x)) = 0.
\end{cases}
\end{equation}

Thus, the circumcenter map $(x,y,z)\mapsto q$ is $C^1$ when restricted to $\zeta$-regular $K$. To see that the map is globally Lipschitz, it suffices to note that it is $1$-homogeneous in $(x,y,z)$.

For the second point, let $s=q(L)\in \R^3$ be the circumcenter of $L$. Then by the triangle inequality, we have
\begin{equation}
\begin{aligned}
 |p-q|\leq  |p-s| + |s-q| = |x-s| + |s-q| \leq |x-q| + 2|s-q| = r + 2d_{KL},\\
 |p-q| \geq |p-s| - |s-q| = |x-s| - |s-q| \geq |x-q| - 2 |s-q| = r - 2d_{KL}. 
\end{aligned}
\end{equation}
This completes the proof.
\end{proof}

\begin{lma}
\label{lem:char_func}
  Let $\zeta>0$, and $a,b,c,d\in \R^2$  such that $K=[a,b,c]$ and $L=[b,c,d]$ are $\zeta$-regular.  
  \begin{itemize}\item[(i)] 
We have that
     \begin{equation*}
        \int_{\R^2} \mathds{1}_{KL}^v(x)\d x = |v|l_{KL}\theta_{KL}\,.
    \end{equation*}
  \item[(ii)] Let $\delta>0$, $v,w\in\R^2$, $\bar v=(v,v\cdot w)\in \R^3$, 
    $\bar a=(a,a\cdot w)\in \R^3$ and $\bar b, \bar c,\bar d\in \R^3$ defined
    analogously.  Let 
  $\bar K=[\bar a,\bar b,\bar c]$, $\bar L=[\bar b,\bar c,\bar d]$.
    Then
    \[
      \int_{\R^2} \mathds{1}_{KL}^v(x)\, \d x = \frac{|\bar v|}{\sqrt{1+|w|^2}}
      \theta_{\bar K\bar L}^{\bar v}l_{\bar K\bar L}\,.
    \]
  \end{itemize}
\end{lma}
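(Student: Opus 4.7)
The plan is to treat both parts as elementary area computations, relying on the fact that $l_{KL}\,|\nu_{KL}\cdot v|$ has a clean geometric meaning as the area of a parallelogram.

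For part (i), I would first identify the set $\{x\in\R^2:\mathds{1}_{KL}^v(x)=1\}$. Assuming $K\cap L$ is the common edge (otherwise both sides of the identity vanish trivially), this set is exactly the Minkowski sum $(K\cap L)+[-v,0]$, i.e.\ the parallelogram spanned by the edge vector $c-b$ and $-v$. The (standard two-dimensional) area of this parallelogram equals base times perpendicular height: the base is the edge length $l_{KL}$, and the height is the component of $v$ in the direction $\nu_{KL}$ orthogonal to the edge, namely $|\nu_{KL}\cdot v|$. Thus
\[
\int_{\R^2}\mathds{1}_{KL}^v(x)\dd x \;=\; l_{KL}\,|\nu_{KL}\cdot v| \;=\; l_{KL}\,|v|\,\theta_{KL}^v,
\]
which is the claim.

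For part (ii), the plan is to apply (i) to rewrite the LHS and then compare with an analogous area in the graph plane. Let $H=\{(x,x\cdot w):x\in\R^2\}\subset\R^3$ and $\Phi:\R^2\to H$, $\Phi(x)=(x,x\cdot w)$. The map $\Phi$ is affine, with constant Jacobian $\sqrt{1+|w|^2}$ (since its Gram determinant is $\det(I+w\otimes w)=1+|w|^2$). The planar parallelogram spanned by $c-b$ and $v$ is mapped by $\Phi$ (up to translation) to the parallelogram in $H$ spanned by $\bar c-\bar b$ and $\bar v$, so the two areas are related by the factor $\sqrt{1+|w|^2}$. Reading off the area of this lifted parallelogram the same way as in (i) — base $l_{\bar K\bar L}=|\bar b-\bar c|$, height $|\bar\nu_{\bar K\bar L}\cdot \bar v|$ within $H$ — gives
\[
l_{\bar K\bar L}\,|\bar\nu_{\bar K\bar L}\cdot\bar v| \;=\; \sqrt{1+|w|^2}\,\cdot\, l_{KL}\,|\nu_{KL}\cdot v|.
\]
Combining with (i) and the identity $|\bar\nu_{\bar K\bar L}\cdot\bar v|=|\bar v|\,\theta_{\bar K\bar L}^{\bar v}$ yields
\[
\int_{\R^2}\mathds{1}_{KL}^v(x)\dd x \;=\; \frac{1}{\sqrt{1+|w|^2}}\,l_{\bar K\bar L}\,|\bar v|\,\theta_{\bar K\bar L}^{\bar v},
\]
as required.

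The only possible subtlety, and the ``main obstacle'' if any, is the justification that $\bar\nu_{\bar K\bar L}$ — a unit vector in $H$ perpendicular to $\bar c-\bar b$ — plays the role of the perpendicular height inside $H$ for the lifted parallelogram. This is immediate from the definition of $\bar\nu_{\bar K\bar L}$ in Notation \ref{not:thetaKL}: it is precisely the unit vector parallel to $H$ and orthogonal to the shared edge, so inside the two-dimensional Euclidean plane $H$ it is the correct ``$\nu$'' to invoke the same base-times-height identity. Everything else is a direct transcription of the argument from (i).
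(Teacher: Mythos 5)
Your proof is correct and follows exactly the same route as the paper: in both parts the support of $\mathds{1}_{KL}^v$ is recognized as a parallelogram, its area is read off as base times perpendicular height, and in (ii) the lifted parallelogram under $x\mapsto(x,x\cdot w)$ is compared via the Jacobian $\sqrt{1+|w|^2}$. The paper states this more tersely and defers the geometry to two figures, whereas you have written out the intermediate identities explicitly; the content is identical.
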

\begin{proof}
  The equation (i) follows from the fact that $\mathds{1}_{KL}^v$ is the
  characteristic function of a parallelogram, see Figure \ref{fig:parallelogram}.
To prove (ii) it suffices to observe that
  $\int_{\R^2} \mathds{1}_{KL}^v(x)\sqrt{1+w^2}\d x$ is the volume of the
  parallelogram from (i) pushed forward by the map $\tilde h(x)= (x,x\cdot
  w)$, see Figure \ref{fig:char_fun_2}.
\end{proof}

  \begin{figure}[h]
\includegraphics[height=5cm]{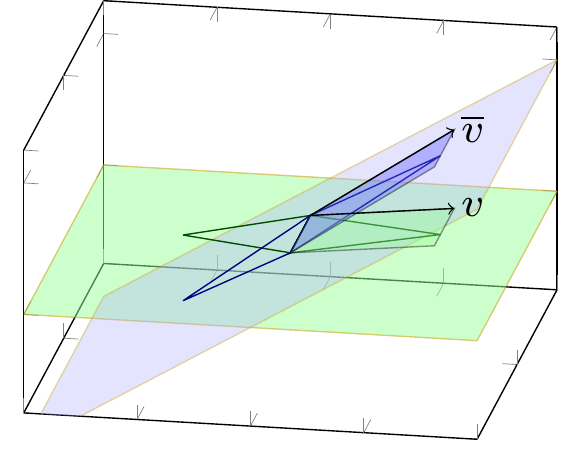}
\caption{The parallelogram pushed forward by an affine map $x\mapsto (x,x\cdot w)$. \label{fig:char_fun_2}} 
\end{figure}

\subsection{Graphs over manifolds}

\begin{assump}
  \label{ass:Mprop}
We assume $M\subset\R^3$ 
is
 an oriented compact two-dimensional $C^\infty$-submanifold of $\R^3$.
\end{assump}
This manifold will be fixed in the following. We denote the normal of $M$ by $n_M:M\to S^2$, and the second fundamental form at $x_0\in M$ is denoted by $S_M(x_0):T_{x_0}M\to T_{x_0}M$.

\medskip

\begin{defi}
  \label{def:radius_injectivity}
   The \emph{radius of injectivity} $\delta(M)>0$ of $M$ is   the largest number such that the map $\phi:M\times (-\delta(M),\delta(M))\to \R^3$, $(x,h) \mapsto x + h n_M(x)$ is injective and the operator norm of $\delta(M)S_M(x)\in\mathcal{L}(T_xM)$ is at most $1$ at every $x\in M$.
  \end{defi}

We define a graph over $M$ as follows:

\begin{defi}
  \label{def:Mgraph}
  \begin{itemize}
\item[(i)] A set $M_h = \{x+ h(x)n_M(x)\,:\,x\in M\}$ is called a \emph{graph} over $M$ whenever $h:M\to \R$ is a continuous function with $\|h\|_\infty \leq \delta(M)/2$.

\item[(ii)] The graph $M_h$ is called a ($Z$-)Lipschitz graph (for $Z > 0$) whenever $h$ is ($Z$-)Lipschitz, and a smooth graph whenever $h$ is smooth.
\item[(iii)] A set $N\subset B(M,\delta(M)/2)$ is said to be locally a tangent Lipschitz
  graph over $M$ if for every $x_0\in M$ there exists $r>0$ and a Lipschitz
  function $h:(x_0 +T_{x_0}M)\cap B(x_0,r)\to \R$ such that the intersection of $N$
  with the cylinder $C(x_0,r,\frac{\delta(M)}{2})$ over $(x_0 +T_{x_0}M)\cap B(x_0,r)$ with height $\delta(M)$ in both
  directions of $n_M(x_0)$, where
  \[
     C(x_0,r,s) := \left\{x + tn_M(x_0)\,:\,x\in (x_0 + T_{x_0}M) \cap B(x_0,r), t\in [-s,s] \right\},
  \]
  is equal to the graph of $h$ over $T_{x_0}M\cap B(x_0,r)$,
  \[
    N \cap C\left(x_0,r,\frac{\delta(M)}{2}\right) =\{x+h(x)n_M(x_0):x\in (x_0+T_{x_0}M)\cap B(x_0,r)\}\,.
    \]
\end{itemize}
\end{defi}

\begin{lma}\label{lma: graph property}
  Let $N\subset B(M,\delta(M)/2)$ be locally a tangent Lipschitz graph over $M$.Then $N$ is a Lipschitz graph over $M$.
 
\end{lma}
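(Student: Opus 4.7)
The plan is to construct a function $h:M\to\R$ with $N=M_h$ by letting $h(x)$ be the signed distance along $n_M(x)$ from $x$ to the (unique) point of $N$ on the normal segment through $x$, and then to verify that $h$ is Lipschitz. The definition of $\delta(M)$ ensures that the tubular map $\phi:M\times(-\delta(M),\delta(M))\to\R^3$, $(x,t)\mapsto x+tn_M(x)$, is a smooth diffeomorphism onto its image; in particular the nearest-point projection $\pi_M:B(M,\delta(M)/2)\to M$ is smooth and every $y\in B(M,\delta(M)/2)$ decomposes uniquely as $y=\pi_M(y)+s(y)n_M(\pi_M(y))$ with $|s(y)|<\delta(M)/2$.

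To define $h$, fix $x_0\in M$. The local tangent graph hypothesis provides $r>0$ and a Lipschitz $\tilde h:(x_0+T_{x_0}M)\cap B(x_0,r)\to\R$ whose graph is $N\cap C(x_0,r,\delta(M)/2)$. Taking the basepoint $p=x_0$ in this graph gives $x_0+\tilde h(x_0)n_M(x_0)\in N$, so I set $h(x_0):=\tilde h(x_0)$; automatically $|h(x_0)|\le\delta(M)/2$ since $N\subset B(M,\delta(M)/2)$. Uniqueness is immediate: any $x_0+tn_M(x_0)\in N$ with $|t|\le\delta(M)/2$ lies in $C(x_0,r,\delta(M)/2)$ and projects to $x_0\in x_0+T_{x_0}M$, so by the graph property equals $x_0+\tilde h(x_0)n_M(x_0)$. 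The inclusion $M_h\subset N$ is by construction, and conversely for $y\in N$, applying this uniqueness to $x:=\pi_M(y)$ and $t:=s(y)$ gives $y=x+h(x)n_M(x)\in M_h$.

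For Lipschitz continuity, fix $x_0\in M$ and use the local tangent graph representation at $x_0$. For $x\in M$ in a small neighborhood $U$ of $x_0$, the point $y(x):=x+h(x)n_M(x)\in N$ lies in $C(x_0,r,\delta(M)/2)$, so it can be written as $y(x)=p(x)+\tilde h(p(x))n_M(x_0)$ with $p(x)=\pi_{T_{x_0}M}(y(x))$. Pairing with $n_M(x_0)$ yields the identity
\[
h(x)\bigl(n_M(x)\cdot n_M(x_0)\bigr) \;=\; \tilde h(p(x)) - (x-x_0)\cdot n_M(x_0),
\]
in which the $x$-dependence is through smooth functions of $x$ and through $\tilde h\circ p$. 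Shrinking $U$, the coefficient $n_M(x)\cdot n_M(x_0)\ge 1/2$, while the Lipschitz dependence of the right hand side on $h(x)$ (through $p(x,h)=\pi_{T_{x_0}M}(x+hn_M(x))$) is bounded by $\|\tilde h\|_{\mathrm{Lip}}\cdot|\pi_{T_{x_0}M}n_M(x)|$. Since $\pi_{T_{x_0}M}n_M(x_0)=0$, this last quantity is $O(|x-x_0|)$ and can be made less than $1/4$ on $U$; absorbing the $h$-dependence into the left hand side then produces a local Lipschitz estimate for $h$, and a finite subcover of the compact manifold $M$ yields a global Lipschitz estimate.

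The main obstacle is this implicit equation, since $p(x)$ itself depends on $h(x)$ and thus the Lipschitz bound on $h$ is not immediate from that of $\tilde h$ alone. The key observation is that the weight of this self-dependence, namely $\pi_{T_{x_0}M}n_M(x)$, vanishes at $x_0$, so the equation becomes uniformly contractive in $h(x)$ on a sufficiently small neighborhood of $x_0$, and a direct two-point estimate comparing $|h(x_1)-h(x_2)|$ with $|x_1-x_2|$ closes the proof.
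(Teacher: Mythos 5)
Your proof is correct and follows essentially the same route as the paper's: define $h$ pointwise by the unique intersection of $N$ with the normal fiber through each $x\in M$, then obtain local Lipschitz continuity from the local tangent-graph representation, and pass to a global Lipschitz bound by a covering argument on the compact manifold $M$. The only difference is one of detail: the paper asserts the local Lipschitz estimate as an immediate consequence of the local tangent-graph property and the smoothness of $M$, whereas you spell out the implicit equation $h(x)\,(n_M(x)\cdot n_M(x_0)) = \tilde h(p(x)) - (x-x_0)\cdot n_M(x_0)$ and observe that the self-dependence coefficient $\pi_{T_{x_0}M}n_M(x)$ vanishes at $x_0$, making the equation contractive in $h(x)$ near $x_0$; this is precisely the argument the paper leaves to the reader.
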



    \begin{proof}
By Definition \ref{def:Mgraph} (iii), we have that for every $x\in M$, there
exists exactly one element
\[
  x'\in
  N\cap \left( x+n_M(x_0)[-\delta(M),\delta(M)]\right)\,.
\]
We write $h(x):=(x'-x)\cdot n_M(x)$, which obviously
implies $N=M_h$. For every $x_0\in
M$ there exists a neighborhood of $x_0$ such that $h$ is Lipschitz continuous in
this neighborhood by
the locally tangent Lipschitz property and the regularity of $M$. The global
Lipschitz property for $h$ follows from the local one by a standard covering argument.
    \end{proof}


    \begin{lma}
      \label{lem:graph_rep}
      Let $h_j\in W^{1,\infty}(M)$ with $\|h_j\|_{\infty}\leq\delta(M)/2$ and $h_j\wsto h\in W^{1,\infty}(M)$ for $j\to \infty$. Then
      for every point $x\in M$, there exists a neighborhood $V\subset x+T_xM$,
      a Euclidean motion $R$ with $U:=R(x+T_xM)\subset \R^2$,  functions $\tilde
      h_j:U\to\R$ and $\tilde h:U\to \R$ such that $\tilde h_j\wsto \tilde h$
      in $W^{1,\infty}(U)$ and
      \[
        \begin{split}
          R^{-1}\Gr \tilde h_j&\subset M_{h_j} \\
          R^{-1}\Gr \tilde h&\subset M_{h} \,.
        \end{split}
        \]
    \end{lma}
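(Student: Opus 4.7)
The plan is to parameterize $M$ locally as a graph over its tangent plane at $x$, push the normal graph $M_{h_j}$ through this chart, and split the resulting parameterization into its components parallel and orthogonal to $T_xM$. Specifically, fix $x \in M$: since $M$ is smooth, there exist $r_0 > 0$ and a smooth function $g:(x+T_xM)\cap B(x,r_0)\to\R$ with $g(x) = 0$ and $\nabla g(x) = 0$ such that $y(z) := z + g(z)\,n_M(x)$ is a smooth diffeomorphism from $V := (x+T_xM)\cap B(x,r)$ onto a neighborhood of $x$ in $M$, for an $r\leq r_0$ to be chosen below. Writing $\pi_{T_xM}$ for the orthogonal projection onto $T_xM$ and $\Phi_j(z) := y(z) + h_j(y(z))\,n_M(y(z))$, I decompose $\Phi_j(z) = F_j(z) + H_j(z)\,n_M(x)$ with $F_j(z)\in x+T_xM$, where
\[
F_j(z) = z + h_j(y(z))\,\pi_{T_xM} n_M(y(z)), \qquad H_j(z) = g(z) + h_j(y(z))\,\bigl(n_M(y(z))\cdot n_M(x)\bigr).
\]
Let $R$ be a rigid motion of $\R^3$ that sends $x$ to $0$ and $n_M(x)$ to $e_3$, thus identifying $x + T_xM$ with $\R^2 \subset \R^3$.

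Next I want to show that $F_j$ is a bi-Lipschitz homeomorphism on $V$ with constants independent of $j$. The decisive facts are that $\pi_{T_xM} n_M(x) = 0$, giving $|\pi_{T_xM}n_M(y(z))| \leq C(M)\,|z-x|$, and that $\|h_j\|_\infty\|S_M\|_{\mathrm{op}} \leq (\delta(M)/2)(1/\delta(M)) = 1/2$, which is precisely the bound built into Definition~\ref{def:radius_injectivity}. Differentiating $F_j$ and combining these bounds with the uniform $W^{1,\infty}$-bound on $(h_j)$ yields $\|DF_j - I\|_{\mathrm{op}} \leq 1/2 + C\,r$ on $V$; for $r$ small enough every $F_j$ is then bi-Lipschitz with uniform constants, and since $F_j(x) = x$, the image $F_j(V)$ contains a fixed disc $V' := (x + T_xM)\cap B(x, r/4)$ for every $j$. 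Setting $U := R(V')\subset\R^2$, I define $\tilde{h}_j: U\to\R$ by $\tilde{h}_j := H_j\circ F_j^{-1}\circ R^{-1}$; by construction $R^{-1}\Gr\tilde{h}_j = \Phi_j(F_j^{-1}(V')) \subset M_{h_j}$, and the analogous definition with $h$ yields $\tilde{h}$.

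To conclude $\tilde{h}_j\wsto\tilde{h}$ in $W^{1,\infty}(U)$, I first observe that $(h_j)$ bounded in $W^{1,\infty}(M)$ is equi-Lipschitz, so Arzelà–Ascoli together with uniqueness of the weak-$*$ limit gives $h_j\to h$ uniformly on $M$, whence $F_j\to F$ and $H_j\to H$ uniformly on $V$. The uniform bi-Lipschitz bounds then transfer this to the inverses via $|F_j^{-1}(u) - F^{-1}(u)| \leq L\,|F_j(F^{-1}(u)) - u|$, so $F_j^{-1}\to F^{-1}$ uniformly on $V'$ and $\tilde{h}_j\to\tilde{h}$ uniformly on $U$. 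Since $H_j$ and $F_j^{-1}$ are uniformly Lipschitz, $(\tilde{h}_j)$ is bounded in $W^{1,\infty}(U)$; every subsequence then admits a further weakly-$*$ convergent subsequence, whose limit must coincide with the uniform limit $\tilde{h}$, forcing weak-$*$ convergence of the whole sequence.

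The main technical hurdle is securing the neighborhood $U$ uniformly in $j$. This depends on the combination of the orthogonality $\pi_{T_xM}n_M(x) = 0$, which makes the $\nabla h_j$-contribution to $DF_j - I$ vanish at $x$ and stay small on a small $V$, and the tight bound $\|h_j\|_\infty\|S_M\|_{\mathrm{op}}\leq 1/2$ inherited from the radius of injectivity, which controls the curvature term independently of $j$. Without both ingredients the perturbation $F_j - \mathrm{id}$ could fail to be uniformly small, and the common domain of definition could shrink to a point as $j\to\infty$.
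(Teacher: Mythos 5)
Your proposal is correct and, in fact, supplies the full argument that the paper compresses into a single sentence ("follows immediately from our assumption that $M$ is $C^2$ and the boundedness of $\|\nabla h_j\|_{L^\infty}$"). The decomposition $\Phi_j = F_j + H_j\,n_M(x)$, the observation that $\pi_{T_xM}n_M(x)=0$ kills the $\nabla h_j$-contribution to $DF_j - I$ at $x$, and the use of $\|h_j\|_\infty\,\|S_M\|_{\mathrm{op}}\le 1/2$ from Definition~\ref{def:radius_injectivity} to keep the curvature term strictly below $1$ are exactly the ingredients that make the claimed graph representation hold on a $j$-independent neighborhood, which is the only nontrivial point. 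The passage from uniform convergence plus a uniform $W^{1,\infty}$ bound to weak-$*$ convergence (via the subsequence/uniqueness argument and the distributional identification of the gradient limit) is also handled correctly. This is essentially the unique natural route, so no comparison is needed; you have simply made explicit what the paper takes as evident.
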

    \begin{proof}
      This follows immediately from our assumption that $M$ is $C^2$ and the
      boundedness of $\|\nabla h_j\|_{L^\infty}$.
    \end{proof}
    


    \section{Proof of compactness and lower bound}

\label{sec:proof-comp-lower}

\begin{notation}
  \label{not:push_gen}
  If $U\subset\R^2$, $\cT$ is a flat triangulated surface $\cT\subset U$, $h:U\to\R$ is Lipschitz, and 
  $K=[a,b,c]\in\cT$, then we write 
  \[
    h_*K=[(a,h(a)),(b,h(b)),(c,h(c))]\,.
    \]
  We denote by  $h_*\cT$ for the triangulated surface defined by
  \[
    K\in\cT\quad\Leftrightarrow \quad h_*K\in
    h_*\cT\,.
  \]
\end{notation}

For an illustration  Notation \ref{not:push_gen}, see Figure \ref{fig:push_gen}.

\begin{figure}[h]
\includegraphics[height=5cm]{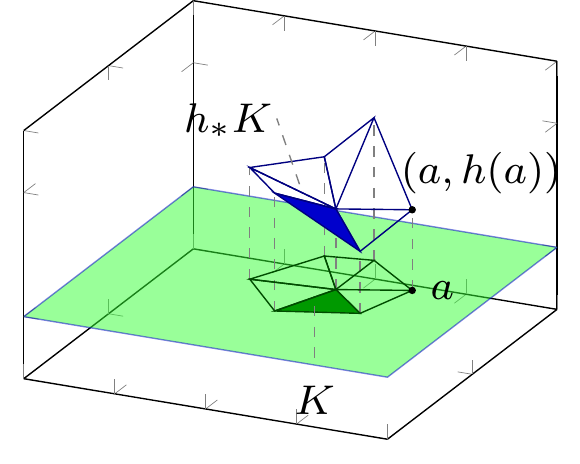}
\caption{Definition of the push forward of a triangulation $\mathcal T\subset \R^2$ by a map $h:\R^2 \to \R$. \label{fig:push_gen}} 
\end{figure}



    \begin{lma}
      \label{lem:CS_trick}
      Let $U\subset\R^2$, let $\cT$ be a flat triangulated surface with $U\subset
      \cT\subset\R^2$,  let $h$ be a Lipschitz function $U\to \R$ that is
      affine on each triangle of $\cT$, $\cT^*=h_*\cT$, let $g$ be a function that is constant on each
      triangle of $\cT$,  $v\in \R^2$, $U^v=\{x\in\R^2:[x,x+v]\subset U\}$,  and $W\subset U^v$.
      \begin{itemize}\item[(i)] 
Then
        \[
          \begin{split}
            \int_{W}& |g(x+v)-g(x)|^2\d x\\
            &\leq | v| \left(\sum_{K,L\in\cT}
              \frac{l_{K^*L^*}}{d_{K^*L^*}} |g(K)-g(L)|^2\right) \max_{x\in W}
            \sum_{K,L\in\cT} \mathds{1}^v_{KL}(x) \frac{\theta_{KL}^vl_{KL}d_{K^*L^*}}{l_{K^*L^*}}
              \,,
          \end{split}
        \]
where we have written $K^*=h_*K$, $L^*=h_*L$ for $K,L\in \cT$.
       
        \item[(ii)]
Let $w\in\R^2$, and denote by
 $\bar K$, $\bar L$  the triangles $K,L$ pushed forward by the map
        $x\mapsto (x,x\cdot w)$.
        Then
        \[
          \begin{split}
            \int_{W}& |g(x+v)-g(x)|^2\d x\\
            &\leq \frac{|\bar v|}{\sqrt{1+|w|^2}} \left(\sum_{K,L\in\cT}
              \frac{l_{K^*L^*}}{d_{K^*L^*}} |g(K)-g(L)|^2\right) \max_{x\in W}
            \sum_{K,L\in\cT} \mathds{1}^v_{KL}(x) \frac{\theta_{\bar K\bar
              L}^{\bar v}l_{\bar K\bar L}d_{K^*L^*}}{l_{K^*L^*}}\,.
          \end{split}
        \]
      \end{itemize}
      \end{lma}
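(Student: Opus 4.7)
The plan is to reduce the integrand to a sum over edge crossings, apply a weighted Cauchy–Schwarz inequality, and close using Lemma \ref{lem:char_func} to evaluate the area integrals $\int \mathds{1}_{KL}^v \d x$.

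\textbf{Step 1 (telescoping).} For a.e.\ $x \in W$, the segment $[x,x+v] \subset U$ traverses a finite sequence of triangles $K_0,K_1,\ldots,K_m$ in $\cT$, with consecutive triangles sharing an edge; the set of $x$ whose segment meets a vertex or lies along an edge has Lebesgue measure zero and may be discarded. By definition, the crossing of the shared edge of $K_i$ and $K_{i+1}$ corresponds to $\mathds{1}_{K_iK_{i+1}}^v(x) = 1$. Since $g$ is constant on each triangle, telescoping gives
\[
  |g(x+v) - g(x)| \le \sum_{i=0}^{m-1} |g(K_{i+1}) - g(K_i)| \le \sum_{K,L \in \cT} \mathds{1}_{KL}^v(x)\,|g(K)-g(L)|,
\]
where the final sum is over unordered pairs (effectively, over adjacent pairs).

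\textbf{Step 2 (weighted Cauchy–Schwarz).} For positive weights $b_{KL}$ to be specified, apply Cauchy–Schwarz pointwise to obtain
\[
  |g(x+v)-g(x)|^2 \le \Bigl(\sum_{K,L} \mathds{1}_{KL}^v(x)\,\tfrac{l_{K^*L^*}}{d_{K^*L^*}}\,|g(K)-g(L)|^2\, b_{KL}\Bigr)\Bigl(\sum_{K,L} \mathds{1}_{KL}^v(x)\,\tfrac{d_{K^*L^*}}{l_{K^*L^*} b_{KL}}\Bigr).
\]
Integrating over $W$, bounding the second factor by its maximum over $W$, and exchanging sum and integral in the first factor yields
\[
  \int_W |g(x+v)-g(x)|^2 \d x \le \Bigl(\max_{x \in W} \sum_{K,L} \mathds{1}_{KL}^v(x)\,\tfrac{d_{K^*L^*}}{l_{K^*L^*} b_{KL}}\Bigr) \sum_{K,L} \tfrac{l_{K^*L^*}}{d_{K^*L^*}}\,|g(K)-g(L)|^2\, b_{KL}\int_W \mathds{1}_{KL}^v(x)\d x.
\]

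\textbf{Step 3 (choice of weights via Lemma \ref{lem:char_func}).} For part (i), set $b_{KL} = (l_{KL}\,\theta_{KL}^v)^{-1}$. Then Lemma \ref{lem:char_func}(i) gives $b_{KL}\int_W \mathds{1}_{KL}^v \le b_{KL}\int_{\R^2}\mathds{1}_{KL}^v = |v|$, and $1/b_{KL} = l_{KL}\theta_{KL}^v$ is exactly the factor that appears in the max term of the claimed bound; substituting these values yields (i). For part (ii), take instead $b_{KL} = (l_{\bar K \bar L}\,\theta_{\bar K \bar L}^{\bar v})^{-1}$; then Lemma \ref{lem:char_func}(ii) gives $b_{KL}\int_W \mathds{1}_{KL}^v \le |\bar v|/\sqrt{1+|w|^2}$, while the reciprocal appears in the max factor as required. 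The only mildly delicate point is the telescoping in Step 1, which amounts to confirming that the measure-zero exceptional set can be discarded and that the sign ambiguity of $g(K_{i+1})-g(K_i)$ is harmless because of the absolute value; the remainder is a bookkeeping of weights designed so that the $\theta l$ factors cancel after integration.
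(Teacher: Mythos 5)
Your proof is correct and follows essentially the same route as the paper: telescoping the difference across edge crossings, applying a weighted Cauchy--Schwarz inequality, and invoking Lemma~\ref{lem:char_func} to integrate the crossing indicators. The only cosmetic difference is that you introduce the weights $b_{KL}$ abstractly and specialize them at the end, whereas the paper plugs the specific weights directly into the Cauchy--Schwarz step; the underlying computation is identical.
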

    \begin{proof}
By the Cauchy-Schwarz inequality, for $x\in W$, we have that
\[
  \begin{split}
    | g(x+v)- g(x)|^2&\leq \left(\sum_{K,L\in \cT} \mathds{1}_{K
        L}^v(x)| g(K)- g(L)|\right)^2\\
    &\leq \left(\sum_{K,L\in \cT}
      \frac{l_{K^*L^*}}{\theta_{KL}^vl_{KL}d_{K^*L^*}}\mathds{1}_{K
        L}^v(x)| g(K)- g(L)|^2\right)\\
    &\qquad \times\left(\sum_{K,L\in \cT}
        \mathds{1}^v_{KL}(x)\frac{\theta^v_{KL}l_{KL}d_{K^*L^*}}{l_{K^*L^*}}\right)\,.
  \end{split}
\]

Using these estimates and Lemma \ref{lem:char_func} (i), we obtain

\begin{equation}
\begin{aligned}
&\int_{U^v} | g(x+v) -  g(x)|^2\,\d x\\
 \leq & \int_{U^v} \left( \sum_{K,L\in \cT}
   \mathds{1}^v_{KL}(x)\frac{l_{K^*L^*}}{\theta^v_{KL}l_{KL}d_{K^*L^*}} | g(K) -  g(L)|^2 \right)\\
 &\quad \times
 \left( \sum_{K,L\in \cT} \mathds{1}^v_{KL}(x) \frac{\theta^v_{KL}l_{KL}d_{K^*L^*}}{l_{K^*L^*}} \right) \,\d x\\
 \leq  & |v|\left( \sum_{K,L\in \cT}
   \frac{l_{K^*L^*}}{d_{K^*L^*}} | g(K) -  g(L)|^2 \right) \max_{x\in U^v}  \sum_{K,L\in \cT} \mathds{1}^v_{KL}(x) \frac{\theta^v_{KL}l_{KL}d_{K^*L^*}}{l_{K^*L^*}}\,.
 \end{aligned}
\end{equation}
This proves (i). The claim (ii)  is proved analogously, using  $
\frac{\theta_{\bar K\bar L}^{\bar v}l_{\bar K\bar L}d_{K^*L^*}}{l_{K^*L^*}}$ instead of $
\frac{\theta_{ K L}^{v}l_{ K L}d_{K^*L^*}}{l_{K^*L^*}}$ in the
Cauchy-Schwarz inequality, and then Lemma \ref{lem:char_func} (ii).
\end{proof}

  In the following proposition, we will consider sequences of flat triangulated surfaces
  $\cT_j$ with  $U\subset\cT_j\subset\R^2$ and sequences of Lipschitz functions
  $h_j:U\to \R$. We write $\cT_j^*=(h_j)_*\cT_j$, and for $K\in \cT_j$, we write
  \[
    K^*=(h_j)_*K\,.
      \]

\begin{prop}\label{prop:lower_blowup}
Let $U,U'\subset\R^2$ be open, $\zeta>0$, $(\cT_j)_{j\in \N}$
a sequence of flat $\zeta$-regular triangulated surfaces   with $U\subset\cT_j\subset
U'$ and  $\mathrm{size} (\cT_j) \to
0$.  Let $(h_j)_{j\in\N}$ be a sequence of Lipschitz functions $U'\to \R$ with
uniformly bounded gradients such that $h_j$ is affine
on each triangle of $\cT_j$ and the triangulated surfaces
$\cT_j^*=(h_j)_*\cT_j$ satisfy the Delaunay property.

\begin{itemize}
  \item[(i)] Assume that
\[
  \begin{split}
    h_j&\wsto h\quad \text{ in } W^{1,\infty}(U')\,,
  \end{split}
\]
and $\liminf_{j\to \infty}  \sum_{K,L\in\cT_j} \frac{l_{K^*L^*}}{d_{K^*L^*}} |n(K^*) - n(L^*)|^2<\infty$. Then $h\in W^{2,2}(U)$.
\item[(ii)] Let $U=Q=(0,1)^2$,  and let $(g_j)_{j\in\N}$ be a sequence of functions $U'\to\R$ such that $g_j$ is constant on
each triangle in $\cT_j$. Assume that
\[
  \begin{split}
    h_j&\to h\quad \text{ in } W^{1,2}(U')\,,\\
    g_j&\to g \quad\text{ in }
    L^2(U')\,,
  \end{split}
\]
where  $h(x)=w\cdot x$ and $g(x)=u\cdot x$ for some $u,w\in \R^2$. 
Then we have
\[
 u^T\left(\mathds{1}_{2\times 2}+w\otimes w\right)^{-1}u \sqrt{1+|w|^2}\leq \liminf_{j\to \infty}  \sum_{K,L\in\cT_j} \frac{l_{K^*L^*}}{d_{K^*L^*}} |g_j(K) - g_j(L)|^2\,.
\]
\end{itemize}
\end{prop}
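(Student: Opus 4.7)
The strategy for both parts is to apply Lemma \ref{lem:CS_trick} to reduce the assertion to an $L^2$ second-difference estimate, and then to control the maximum term on its right-hand side via the Delaunay property of $\cT_j^*$.

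For (i), take $g_j$ to be each component of the normal $n(K^*) = (-\nabla h_j, 1)/\sqrt{1+|\nabla h_j|^2}$ (with $\nabla h_j$ the constant gradient of $h_j$ on each triangle $K$), apply Lemma \ref{lem:CS_trick} (i) componentwise, and sum:
\[
  \int_W |n_j(x+v) - n_j(x)|^2 \, \d x \leq |v| \, E_j \, \Phi_j^{(i)},
\]
where $E_j = \sum \frac{l_{K^*L^*}}{d_{K^*L^*}} |n(K^*) - n(L^*)|^2$ and $\Phi_j^{(i)} = \max_x \sum_{K,L} \mathds{1}^v_{KL}(x) \theta^v_{KL} l_{KL} d_{K^*L^*}/l_{K^*L^*}$. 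The key geometric input is $\Phi_j^{(i)} \leq C|v|$ uniformly, obtained from the Delaunay property of $\cT_j^*$ by a telescoping-of-circumcenter argument (sketched below). The uniform $W^{1,\infty}$ bound and weak-$*$ convergence give $h_j \to h$ uniformly (Arzel\`a--Ascoli), and the estimate above combined with Fr\'echet--Kolmogorov compactness yields strong $L^2$ convergence of $n_j$ to $n = (-\nabla h, 1)/\sqrt{1+|\nabla h|^2}$. Fatou then produces
\[
  \int_W |n(x+v) - n(x)|^2 \, \d x \leq C|v|^2 \liminf_{j\to\infty} E_j
\]
for all small $v$, which is Nikolskii's characterization of $W^{1,2}$, so $n \in W^{1,2}(U; \R^3)$. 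Since the third component of $n$ is bounded below by a positive constant, inverting gives $\nabla h = -(n_1, n_2)/n_3 \in W^{1,2}(U; \R^2)$ and therefore $h \in W^{2,2}(U)$.

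For (ii), Lemma \ref{lem:CS_trick} (ii) with $w$ the common constant gradient of $h$ yields
\[
  \int_W |g_j(x+v) - g_j(x)|^2 \, \d x \leq \frac{|\bar v|}{\sqrt{1+|w|^2}} \, E_j^{(g)} \, \Phi_j^{(ii)},
\]
with $E_j^{(g)} = \sum \frac{l_{K^*L^*}}{d_{K^*L^*}}|g_j(K)-g_j(L)|^2$ and $\Phi_j^{(ii)} = \max_x \sum \mathds{1}^v_{KL}(x) \theta_{\bar K \bar L}^{\bar v} l_{\bar K \bar L} d_{K^*L^*}/l_{K^*L^*}$. Strong $L^2$ convergence $g_j \to u \cdot x$ gives $\int_W |g_j(x+v) - g_j(x)|^2 \, \d x \to |u \cdot v|^2 |Q^v|$. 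Granting the central claim $\liminf_j \Phi_j^{(ii)} \leq |\bar v|$ (see below), dividing by $|Q^v|$ and sending $v \to 0$ so that $|Q^v| \to 1$ yields $|u \cdot v|^2 \leq \frac{|\bar v|^2}{\sqrt{1+|w|^2}} \liminf_j E_j^{(g)}$. Since $|\bar v|^2 = v^T(\mathds{1}_{2\times 2} + w\otimes w) v$, optimizing over $v \in \R^2$ via the identity $\sup_v |u \cdot v|^2 / (v^T A v) = u^T A^{-1} u$ (for symmetric positive-definite $A$) produces the claimed inequality.

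The main technical obstacle is the bound $\liminf_j \Phi_j^{(ii)} \leq |\bar v|$ (and analogously $\Phi_j^{(i)} \leq C|v|$). Uniform convergence $h_j \to w \cdot x$ together with Lemma \ref{lma: circumcenter regularity} forces the 3D pushforwards $K^* = (h_j)_*K$ to approximate the lifts $\bar K$ of $K$ under $x \mapsto (x, x \cdot w)$ in Hausdorff distance, so that $d_{K^*L^*}/l_{K^*L^*}$ converges to $d_{\bar K \bar L}/l_{\bar K \bar L}$ uniformly over pairs of adjacent $\zeta$-regular triangles. The 3D Delaunay property of $\cT_j^*$ thus passes in the limit to the planar Delaunay property for $\bar \cT_j$ on the affine plane $\Pi_w := \{(x, x \cdot w) : x \in \R^2\}$. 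Inside $\Pi_w$, the circumcenters of Delaunay triangles crossed by the lifted segment $[(x, x\cdot w), (x+v, (x+v)\cdot w)]$ form a monotone dual path whose total displacement projected onto $\bar v / |\bar v|$ equals $\sum \theta_{\bar K \bar L}^{\bar v} d_{\bar K \bar L}$ and is bounded by the segment length $|\bar v|$. Combining these observations yields $\Phi_j^{(ii)} \leq |\bar v| + o(1)$; the corresponding bound $\Phi_j^{(i)} \leq C|v|$ follows similarly, with the constant absorbing the Lipschitz bound and the distortion factor $\sqrt{1 + \|\nabla h_j\|_\infty^2}$.
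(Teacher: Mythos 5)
Your proof of part (i) follows the paper's argument closely and is essentially correct: apply Lemma \ref{lem:CS_trick} to the components of the discrete normal, use the $\zeta$-regularity to bound the maximum term by $C(\zeta)|v|+C\size(\cT_j)$, deduce via Fr\'echet--Kolmogorov / difference-quotient arguments that the limit normal lies in $W^{1,2}$, and invert to recover $h\in W^{2,2}$.

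For part (ii), however, there is a genuine gap in the step you flag as the ``main technical obstacle.'' You assert that uniform convergence $h_j\to w\cdot x$ forces $K^*=(h_j)_*K$ to approximate the lift $\bar K$ so well that $d_{K^*L^*}/l_{K^*L^*}\to d_{\bar K\bar L}/l_{\bar K\bar L}$ \emph{uniformly over adjacent pairs}, whence the monotone circumcenter-path bound $\liminf_j\Phi_j^{(ii)}\leq|\bar v|$ holds for $W=Q^v$. This is false. Uniform $C^0$-closeness $\|h_j-w\cdot x\|_\infty\to 0$ does not control the Hausdorff distance between $K^*$ and $\bar K$ at the scale $\size(\cT_j)$ of the triangles: that would require $\nabla h_j\to w$ in $L^\infty$, whereas the hypothesis only gives $\nabla h_j\to w$ in $L^2$. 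On a triangle $K$ where $|\nabla h_j(K)-w|$ is of order one, the vertex displacement between $K^*$ and $\bar K$ is of order $\size(\cT_j)$, so $q(K^*)$ and $q(\bar K)$ disagree at the full triangle scale and the monotonicity argument breaks. Consequently the quantity $\max_{x\in Q^v}\sum_{K,L}\mathds{1}^v_{KL}(x)\,\theta^{\bar v}_{\bar K\bar L}l_{\bar K\bar L}d_{K^*L^*}/l_{K^*L^*}$ need \emph{not} be asymptotically bounded by $|\bar v|$: a thin strip of ``bad'' triangles crossed by a single unfortunate segment $[x,x+v]$ already ruins it, even though such a strip has vanishing measure and is invisible in $L^2$.

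The paper repairs exactly this by introducing the set of bad triangles $\B_j^\delta=\{K:|\nabla h_j(K)-w|>\delta\}$ and the set of good base points $A_j^{\delta,v}$ of segments meeting few bad triangles; one shows $\L^2(Q^v\setminus A_j^{\delta,v})\to 0$ using $\nabla h_j\to w$ in $L^2$, applies Lemma \ref{lem:CS_trick} with $W=A_j^{\delta,v}$ only, uses the circumcenter telescoping (and the Delaunay property of $\cT_j^*$, transferred to $\bar\cT_j$ on good pairs) for the good pairs, and bounds the bad pairs crudely by $C\delta|\bar v|$ using the cardinality bound built into the definition of $A_j^{\delta,v}$. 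This yields $(1+C\delta)(|\bar v|+C\size(\cT_j))$, and letting $j\to\infty$ followed by $\delta\to 0$ recovers your target bound. Your final optimization step $\sup_{v}|u\cdot v|^2/(v^T(\mathds{1}+w\otimes w)v)=u^T(\mathds{1}+w\otimes w)^{-1}u$ agrees with the paper, so once the good/bad decomposition is added the rest of your argument goes through.
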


\begin{proof}[Proof of (i)]
We write 
\[
E_j:= \sum_{K,L\in \cT_j} \frac{l_{K^*L^*}}{d_{K^*L^*}} |n(K^*)
- n(L^*)|^2 \,.
\]
Fix $v\in B(0,1)\subset\R^2$,  write $U^v=\{x\in\R^2:[x,x+v]\subset U\}$,
and
fix $k\in \{1,2,3\}$.
Define the function $N_j^k:U\to \R^3$ by requiring $N_j^k(x)=n(K^*)\cdot e_k$ for $x\in K\in\cT_j$.
By Lemma \ref{lem:CS_trick} with $g_j=N_j^k$, we
have that
\begin{equation}
\label{eq:11}
  \int_{U^v} |N_{j}^k(x+v) -  N_j^k(x)|^2\,\d x
  \leq |v|
\left(\max_{x\in U^v}  \sum_{K,L\in\cT_j}\mathds{1}^v_{KL}(x) \frac{\theta^v_{KL}l_{KL}d_{K^*L^*}}{l_{K^*L^*}}
              \right) E_j\,.
\end{equation}
Since
$h_j$ is uniformly Lipschitz, there exists a constant $C>0$ such that
\[
  \frac{l_{KL}}{l_{K^*L^*}}
              d_{K^*L^*}<C d_{KL}\,.
\]
We claim that

  \begin{equation}\label{eq:15}
  \begin{split}
    \max_{x\in U^v} \sum_{K,L\in \cT_j} \mathds{1}_{KL}^v(x) \theta_{KL}d_{KL}
        &\lesssim |v|+C\size(\cT_{j})\,.
  \end{split}
  \end{equation}
  
Indeed, let $K_0,\ldots,K_N\in \cT_{j}$ be the sequence of triangles so that there is $i:[0,1]\to \{1,\ldots,N\}$ non-decreasing with $x+tv\in K_{i(t)}$.
We have that for all pairs $K_i,K_{i+1}\in \cT_{j}$,
\begin{equation}
  \label{eq:12}
\theta_{K_iK_{i+1}} d_{K_iK_{i+1}} = \left|(q(K_{i+1})-q(K_i)) \cdot \frac{v}{|v|}\right| \,,
\end{equation}
which yields the last estimate in \eqref{eq:15}. 
Inserting in \eqref{eq:11} yields
\begin{equation}
  \int_{U^v} |N_{j}^k(x+v) -  N_j^k(x)|^2\,\d x 
  \leq
  C|v|(|v|+C\size(\cT_{j})) E_j\,.
\end{equation}

By passing to the limit $j\to\infty$ and standard difference quotient arguments,
it then follows that the limit $N^k=\lim_{j\to\infty} N_j^k$ is in
$W^{1,2}(U)$. Since $h$ is also in $W^{1,\infty}(U)$ and $(N^k)_{k=1,2,3}=(\nabla
h,-1)/\sqrt{1+|\nabla h|^2}$ is the normal to the graph of $h$, it follows that $h\in W^{2,2}(U)$.
\end{proof}

 \bigskip

\begin{proof}[Proof of (ii)]
We write 
\[
E_j:= \sum_{K,L\in \cT_j} \frac{l_{K^*L^*}}{d_{K^*L^*}} |g_j(K)
- g_j(L)|^2 
\]
and may assume without loss of generality that  $\liminf_{j\to \infty}
E_j<\infty$.

Fix $\delta > 0$. Define the set of bad triangles as
\[
\B_j^\delta := \{K \in \cT_{j}\,:\,\left|\nabla  h_j(K)- w\right| > \delta\}.
\]

Fix $v\in B(0,1)$, and write $Q^v=\{x\in \R^2:[x,x+v]\subset Q\}$.  Define the set of good points as
\[
    A_j^{\delta,v} := \left\{x\in Q^v: \#\{K\in \B_j^\delta\,:
     \,K \cap [x,x+v] \neq \emptyset\} \leq
      \frac{\delta|v|}{\size(\cT_{j})}\right\}.
\]

We claim that
  \begin{equation}\label{eq:17}
  \L^2(Q^v \setminus A_j^{\delta,v}) \to 0\qquad\text{ for } j\to\infty\,.
\end{equation}
Indeed,
let $v^\bot=(-v_2,v_1)$, and let $P_{v^\bot}:\R^2\to v^\bot \R$ denote the projection onto the linear subspace parallel to $v^\bot$. Now by the definition of $A_j^{\delta,v}$, we may estimate
\[
  \begin{split}
  \int_{Q^v}|\nabla h_j-w|^2\d x \gtrsim & \# \mathcal B_j^{\delta} \left(\size \cT_j \right)^2 \delta\\
  \gtrsim & \frac{\L^2(Q\setminus A_j^{\delta,v})}{|v|\size\cT_j}\frac{\delta|v|}{\size \cT_j} \left(\size \cT_j \right)^2 \delta\\
  \gtrsim &\L^2(Q^v \setminus A_j^{\delta,v})\delta^2|v|\,,
\end{split}
  \]
and hence \eqref{eq:17} follows by 
 $h_j\to h$ in $W^{1,2}(Q)$.
For the push-forward of $v$ under the affine map $x\mapsto (x,h(x))$, 
we write
\[
  \bar v= (v,v\cdot w)\in\R^3\,.
\]
Also, for $K=[a,b,c]\in \cT_j$, we write
\[
  \bar K=[(a,a\cdot w),(b,b\cdot w),(c,c\cdot w)]=h_*K\,.
  \]
By Lemma \ref{lem:CS_trick}, we have that
\begin{equation}
\label{eq: difference quotient estimate}
\begin{split}
  \int_{A_j^{\delta, v}} &| g_{j}(x+v) -  g_j(x)|^2\d x \\
  & \leq \frac{|\bar v|}{\sqrt{1+|w|^2}} \left(\max_{x\in A_j^{\delta, v}}
    \sum_{K,L\in \cT_j} \mathds{1}^v_{KL}(x) \frac{\theta_{\bar K\bar L}^{\bar
      v}l_{\bar K\bar L}d_{K^*L^*}}{l_{K^*L^*}}\right) E_j\,.
\end{split}
\end{equation}
We claim that
  \begin{equation}
  \max_{x\in A_j^{\delta, v}}  \sum_{K,L\in \cT_j} \mathds{1}_{KL}(x)
  \frac{\theta_{\bar K \bar L}^{\bar v}l_{\bar K\bar L}d_{K^*L^*}}{l_{K^*L^*}} \leq
  (1+C\delta)\left(|\bar v|+C\size(\cT_j)\right)\,.\label{eq:16}
  \end{equation}
  Indeed,
Let $K_0,\ldots,K_N\in \cT_{j}$ be the sequence of triangles so that there is $i:[0,1]\to \{1,\ldots,N\}$ non-decreasing with $x+tv\in K_{i(t)}$.

For all pairs $K_i,K_{i+1}\in \cT_{j} $ we have
\begin{equation}
\theta_{\bar K_{i}\bar K_{i+1}}^{\bar v}d_{\bar K_i\bar K_{i+1}} = (q(\bar K_{i+1})-q(\bar K_i)) \cdot \frac{\bar v}{|\bar v|} \,.
\end{equation}
Also,  we have that for $K_i,K_{i+1}\in \cT_{j} \setminus
\B_j^\delta$,
  \begin{equation*}
    \begin{split}
      \frac{l_{K_i^*K_{i+1}^*}d_{\bar K_i\bar K_{i+1}}}{l_{\bar K_i\bar K_{i+1}}d_{K_i^*K_{i+1}^*}}&\leq 1+C\delta\,.
    \end{split}
\end{equation*}
Hence
\begin{equation}\label{eq: good triangles}
  \begin{split}
    \sum_{i\,:\,\{K_i,K_{i+1}\}\cap \B_k^\delta = \emptyset}&
    \frac{\theta_{\bar K_{i}\bar K_{i+1}}^{\bar v}l_{\bar K_i\bar K_{i+1}}d_{K_i^*K_{i+1}^*}}{l_{K^*_iK_{i+1}^*}}\\
   & \leq (1+C\delta)\sum_{i\,:\,\{K_i,K_{i+1}\}\cap \B_k^\delta = \emptyset}
    \left(\left(q(\bar K_{i+1})-q(\bar K_i)\right)\cdot
      \frac{\bar v}{|\bar v|}\right)\,.
  \end{split}
\end{equation}
If one of the triangles $K_i,K_{i+1}$ is in $\B_j^\delta$, then we may estimate
\[
  \left|\left(q(\bar K_{i+1})-q(\bar K_i)\right) \cdot \frac{\bar v}{|\bar v|}\right|\leq C\size\cT_j\,.
\]
Since there are few bad triangles along $[x,x+v]$, we have, using $x\in A_j^{\delta,v}$,
\begin{equation}\label{eq: bad triangles}
  \begin{split}
    \sum_{i\,:\,\{K_i,K_{i+1}\}\cap \B_k^\delta \neq \emptyset}&
    \frac{\theta_{\bar K_{i}\bar K_{i+1}}^{\bar v}l_{\bar K_i\bar K_{i+1}}d_{K_i^*K_{i+1}^*}}{l_{K^*_iK_{i+1}^*}}-(q(\bar K_{i+1})-q(\bar K_i)) \cdot \frac{\bar v}{|\bar v|}\\
    &\leq C\#\{K\in
    \B_j^\delta\,:\,K \cap [x,x+v] \neq \emptyset\} \size(\cT_j)
    \\
    &\leq C\delta|\bar v|\,.
  \end{split}
\end{equation}

Combining \eqref{eq: good triangles} and \eqref{eq: bad triangles} yields
\begin{equation*}
  \begin{split}
    \sum_{i = 0}^{N-1}\frac{\theta_{\bar K_{i}\bar K_{i+1}}^{\bar v}l_{\bar K_i\bar K_{i+1}}d_{K_i^*K_{i+1}^*}}{l_{K^*_iK_{i+1}^*}}&
\leq (1+C\delta)\sum_{i = 0}^{N-1}(q(\bar K_{i+1})-q(\bar K_i)) \cdot \frac{\bar v}{|\bar v|}+C\delta|\bar v|\\
&=     (1+C\delta)(q(\bar K_N) -
    q(\bar K_0)) \cdot \frac{\bar v}{|\bar v|} + C \delta |\bar v| \\
    &\leq (1+C\delta)\left(|\bar v|
      + C\size(\cT_{j})\right).
  \end{split}
\end{equation*}
This proves \eqref{eq:16}.

\medskip

Inserting \eqref{eq:16} in \eqref{eq: difference quotient estimate},  and
passing to the limits $j\to\infty$ and  $\delta\to 0$, we obtain 
  \[|v\cdot u |^2
        \leq \frac{|\bar v|^2}{\sqrt{1+|w|^2}}\liminf_{j\to \infty}E_j\,.
    \]

    Now let
    \[
      \underline{u}:=\left(\mathds{1}_{2\times 2},w\right)^T \left(\mathds{1}_{2\times 2}+w\otimes w\right)^{-1}u\,.
    \]
    Then we have $|\underline{u}\cdot \bar v|=|u\cdot v|$ and hence
    \[
      \begin{split}
      |\underline{u}|^2&=\sup_{v\in \R^2\setminus \{0\}}\frac{|\underline{u}\cdot \bar v|^2}{|\bar v|^2}\\
      &\leq \frac{1}{\sqrt{1+|w|^2}}\liminf_{j\to \infty}E_j\,.
      \end{split}
    \]
This proves the proposition.
\end{proof}

\subsection{Proof of compactness and lower bound in Theorem \ref{thm:main}}

\begin{proof}[Proof of  Theorem \ref{thm:main} (o)]
For a subsequence (no relabeling), we have that $h_j\wsto h$ in
$W^{1,\infty}(M)$. By Lemma \ref{lem:graph_rep}, $\cT_j$ may be locally
represented as the graph of a Lipschitz function $\tilde h_j:U\to \R$, and $M_h$
as the graph of a Lipschitz function $\tilde h:U\to \R$, where $U\subset\R^2$
and $\tilde h_j\wsto \tilde h$ in $W^{1,\infty}(U)$.

\medskip

It remains to prove that $\tilde h\in W^{2,2}(U)$.  Since the norm of the
gradients are uniformly bounded, 
$\|\nabla \tilde h_j\|_{L^\infty(U)}<C$, we have that the projections of $\cT_j$ to
$U$ are (uniformly) regular flat triangulated surfaces. Hence by Proposition
\ref{prop:lower_blowup} (i), we have that $\tilde h\in W^{2,2}(U)$. 
\end{proof}

\begin{proof}[Proof of Theorem \ref{thm:main} (i)]

Let $\mu_j =  \sum_{K,L\in \cT_j} \frac{1}{d_{KL}} |n(K) - n(L)|^2
\H^1|_{K\cap L}\in \M_+(\R^3)$. Note that either a subsequence of $\mu_j$ converges
narrowly to some $\mu \in \M_+(M_h)$ or there is nothing to show. We will show in
the first case that
  \begin{equation}
  \frac{d\mu}{d\H^2}(z) \geq |Dn_{M_h}|^2(z)\label{eq:7}
  \end{equation}
at $\H^2$-almost every point $z\in M_h$ which implies in particular the lower bound.

By Lemma \ref{lem:graph_rep},  we may reduce the proof to the situation that $M_{h_j}$, $M_h$ are given as
graphs of Lipschitz functions $\tilde h_j:U\to \R$, $\tilde h:U\to \R$
respectively, where  $U\subset \R^2$ is some open bounded set.

We have that $\tilde h_j$ is piecewise
affine on some (uniformly in $j$) regular triangulated surface  $\tilde \cT_j$ that satisfies
\[
  (\tilde h_j)_*\tilde \cT_j=\cT_j\,.
  \]
Writing down the surface normal to $M_h$ in the coordinates of $U$,
\[N(x)=\frac{(-\nabla \tilde h, 1)}{\sqrt{1+|\nabla \tilde h|^2}}\,,
  \]
  we have that almost every $x\in U$ is a Lebesgue point of $\nabla N$.
We write $N^k=N\cdot e_k$ and note that \eqref{eq:7} is equivalent to
  \begin{equation}
    \label{eq:8}
    \frac{\d\mu}{\d\H^2}(z)\geq \sum_{k=1}^3\nabla N^k(x)\cdot \left(\mathds{1}_{2\times
        2}+\nabla \tilde h(x)\otimes\nabla \tilde h(x)\right)^{-1}\nabla
    N^k(x)\,,
  \end{equation}
where  $z=(x,\tilde h(x))$.
Also, we define $N_j^k:U\to\R^3$ by letting $N_j^k(x)=n((\tilde h_j)_*K)\cdot
e_k$  for $x\in K\in
\tilde {\mathcal T_j}$. (We recall that $n((\tilde h_j)_*K)$ denotes the
normal of the triangle $(\tilde h_j)_*K$.)
  
\medskip  




Let now $x_0\in U$ be a Lebesgue point of $\nabla \tilde h$ and $\nabla N$.
We write $z_0=(x_0,\tilde h(x_0))$.
Combining the narrow convergence $\mu_j\to\mu$ with the Radon-Nikodym differentiation Theorem, we may  choose a sequence $r_j\downarrow 0$ such that
\[
  \begin{split}
    r_j^{-1}\size{\cT_j}&\to 0\\
    \liminf_{j\to\infty}\frac{\mu_j(Q^{(3)}(x_0,r_j))}{r_j^2}&= \frac{\d\mu}{\d\H^2}(z_0)\sqrt{1+|\nabla \tilde h(x_0)|^2}\,,
  \end{split}
\]
where $Q^{(3)}(x_0,r_j)=x_0+[-r_j/2,r_j/2]^2\times \R$ is the cylinder over $Q(x_0,r_j)$.

Furthermore,  let $\bar N_j,\bar h_j,\bar N,\bar h: Q\to \R$  be defined by 
\[
  \begin{split}
    \bar N_j^k(x)&=\frac{N_j(x_0+r_j x)-N_j(x_0)}{r_j}\\
    \bar N^k(x)&=\nabla N^k(x_0)\cdot (x-x_0)\\
    \bar h_j(x)&=\frac{\tilde h_j(x_0+r_j x)-\tilde h_j(x_0)}{r_j}\\
    \bar h(x)&=\nabla \tilde h(x_0)\cdot (x-x_0)\,.
  \end{split}
\]
We recall that by assumption we have that $N^k\in W^{1,2}(U)$. This
implies in particular that (unless $x_0$ is contained in a certain set of
measure zero, which we discard), we have that
  \begin{equation}\label{eq:9}
  \bar N_j^k\to \bar N^k\quad\text{ in } L^2(Q)\,.
  \end{equation}
Also, let $T_j$ be the blowup map
\[
  T_j(x)=\frac{x-x_0}{r_j}
\]
and let $\cT_j'$ be the triangulated surface one obtains by blowing up $\tilde\cT_j$,
defined by
\[
  \tilde K\in \tilde \cT_j\quad \Leftrightarrow \quad T_j\tilde K \in \cT_j'\,.
  \]
Now let $\mathcal S_j$ be the smallest subset of  $\cT_j'$ (as sets of
triangles) such that
$Q\subset\mathcal S_j$ (as subsets of $\R^2$). 
Note that $\size\mathcal S_j\to 0$,  $\bar N_j^k$ is constant  and $\bar h_j$ is
affine on each $K\in \mathcal S_j$. Furthermore, for $x\in K\in \tilde \cT_j$, we have that
\[
  \nabla \tilde h_j(x)=\nabla \bar h_j(T_jx)
\]
This implies in particular
  \begin{equation}
  \bar h_j\to \bar h\quad \text{ in } W^{1,2}(Q)\,.\label{eq:6}
  \end{equation}
Concerning the discrete energy functionals, we have for the rescaled
triangulated surfaces $(\cT_j')^*=(\bar h_j)_* \cT_j'$, with $K^*=(\bar h_j)_*K$
for $K\in \cT_j'$,
  \begin{equation}\label{eq:10}
\liminf_{j\to\infty} \sum_{K,L\in
    \cT_j'}\frac{l_{K^*L^*}}{d_{K^*L^*}} |\bar N_j(K)-\bar N_j(L)|^2\leq  \liminf_{j\to\infty}r_j^{-2}\mu_j(Q^{(3)}(x_0,r_j)) \,.
\end{equation}

Thanks to \eqref{eq:9}, \eqref{eq:6}, we may  apply Proposition
\ref{prop:lower_blowup} (ii) to the sequences of functions $(\bar h_j)_{j\in\N}$,
$(\bar N_j^k)_{j\in\N}$. This yields (after summing over $k\in\{1,2,3\}$)
\[
  \begin{aligned}
    |Dn_{M_h}|^2(z_0)&\sqrt{1+|\nabla \tilde
      h(x_0)|^2}\\
   & = \nabla N(x_0)\cdot \left(\mathds{1}_{2\times 2} +\nabla \tilde h(x_0)\otimes
      \nabla \tilde h(x_0)\right)^{-1}\nabla N(x_0)\sqrt{1+|\nabla \tilde
      h(x_0)|^2} \\
   &  \leq  \liminf_{j\to\infty} \sum_{K, L\in
    \cT_j'}\frac{l_{K^*L^*}}{d_{K^*L^*}} |\bar N_j(K)-\bar N_j(L)|^2\,,
  \end{aligned}
  \]

which in combination with \eqref{eq:10} yields \eqref{eq:8} for $x=x_0$, $z=z_0$ 
and completes the proof
of the lower bound.
\end{proof}

\section{Surface triangulations and upper bound}

\label{sec:surf-triang-upper}



Our plan for the construction of a recovery sequence is as follows:  We shall construct optimal sequences of triangulated surfaces first locally around a point $x\in M_h$. It turns out the optimal triangulation must be aligned with the principal curvature directions at $x$. By a suitable covering of $M_h$, this allows for an approximation of the latter in these charts (Proposition \ref{prop: local triangulation}). We will then formulate sufficient conditions for a vertex set to supply a global approximation (Proposition \ref{prop: Delaunay existence}). The main work that remains to be done at that point to obtain a proof of Theorem \ref{thm:main} (ii) is  to add vertices to the local approximations obtained from Proposition \ref{prop: local triangulation} such that the conditions of Proposition \ref{prop: Delaunay existence} are fulfilled.





\subsection{Local optimal triangulations}

\begin{prop}\label{prop: local triangulation}
There are constants $\delta_0, C>0$ such that for all $U \subset \R^2$ open, convex, and bounded; and $h\in C^3(U)$ with $\|\nabla h\|_\infty \eqqcolon  \delta \leq \delta_0$, the following holds:

Let $\eps > 0$, $ C\delta^2 < |\theta| \leq \frac12$, and define $X \coloneqq \{(\eps k + \theta \eps l , \eps l, h(\eps k + \theta \eps l, \eps l))\in U\times \R\,:\,k,l\in \Z\}$. Then any Delaunay triangulated surface $\cT$ with vertex set $X$ and maximum circumradius $\max_{K\in \cT} r(K) \leq \eps$ has
\begin{equation}\label{eq: local error}
\begin{aligned}
 \sum_{K,L\in \cT}& \frac{\l{K}{L}}{d_{KL}}|n(K) - n(L)|^2\\
\leq &\left(1+ C(|\theta|+\delta+\e)\right) \L^2(U) \times\\
&\times\left(\max_{x\in U} |\partial_{11} h(x)|^2 + \max_{x\in U} |\partial_{22} h(x)|^2 + \frac{1}{|\theta|} \max_{x\in U} |\partial_{12} h(x)|^2  \right)+C\e\,.
\end{aligned}
\end{equation}
\end{prop}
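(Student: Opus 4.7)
The overall strategy is to decompose the energy as a sum over parallelogram cells of the underlying lattice, estimate each cell's contribution by a quadratic form in the Hessian of $h$, and add up. The first step would be to argue that, under the hypotheses $C\delta^2<|\theta|$ and $\max_{K\in\cT}r(K)\leq\eps$, the Delaunay triangulated surface $\cT$ coincides (away from cells meeting $\partial U$) with the pushforward $h_*\cT'$ of the unique planar Delaunay triangulation $\cT'$ of the projected lattice $X'=\{(\eps k+\theta\eps l,\eps l):k,l\in\Z\}\cap U$. The planar $\cT'$ splits each parallelogram cell into two triangles along the ``short'' diagonal, whose choice is determined by $\sgn\theta$; the lower bound $|\theta|>C\delta^2$ is exactly what guarantees consistency of this choice after lifting, since the two competing 2D diagonals differ in squared circumradius by $\Theta(|\theta|\eps^2)$, while the lifting perturbs each 2D circumradius by only $O(\delta^2\eps)$, controlled by Lemma \ref{lma: circumcenter regularity}.

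The per-cell computation is then essentially algebraic. Taylor-expand $h(x)=h(x_0)+\nabla h(x_0)\cdot(x-x_0)+\tfrac12(x-x_0)^T H_0(x-x_0)+O(\eps^3)$ around a cell center $x_0$, with $H_0=D^2h(x_0)$. For each of the three edge types---horizontal (length $\eps$), slanted (length $\eps\sqrt{1+\theta^2}$), and diagonal (length $\eps\sqrt{1+(1-|\theta|)^2}$)---one computes first the planar geometry $l_{KL},d_{KL}$ for the flat lattice triangulation, obtaining $l_{KL}/d_{KL}=1/|\theta|$ for diagonal edges and $l_{KL}/d_{KL}=O(1)$ for horizontal and slanted edges, and second the normal jump $|n(K^*)-n(L^*)|^2=|\nabla h_{\mathrm{aff}}|_K-\nabla h_{\mathrm{aff}}|_L|^2(1+O(\delta^2))$, where the piecewise-affine gradients are explicit linear combinations of entries of $H_0$ determined by the vertex geometry of each triangle. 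A direct expansion shows that at leading order the horizontal edge contributes $(1+O(|\theta|))\eps^2H_{22}^2$, the slanted edge contributes $(1+O(|\theta|))\eps^2H_{11}^2$, and the diagonal edge contributes $(1+O(|\theta|))\eps^2H_{12}^2/|\theta|$, after absorbing cross-terms of order $|\theta|$ into the stated error. The flat-to-lifted circumcenter perturbation adds a further multiplicative $(1+O(\delta))$ by Lemma \ref{lma: circumcenter regularity}.

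Summing per-cell contributions over all interior cells of $U$ produces a Riemann sum approximating $\int_U\bigl((\partial_{11}h)^2+(\partial_{22}h)^2+\tfrac{1}{|\theta|}(\partial_{12}h)^2\bigr)\dd x$, which is bounded above by $\L^2(U)$ times the stated supremum-of-squares expression. The Taylor remainder is $O(\eps^3)$ per cell and the number of cells is $\L^2(U)/\eps^2+O(\eps^{-1})$, so the Taylor error totals $O(\eps\,\L^2(U))$; cells meeting $\partial U$ contribute $O(\eps)$ thanks to $\zeta$-regularity, accounting for the additive $C\eps$ term. Combining all perturbative factors produces the multiplicative $(1+C(|\theta|+\delta+\eps))$ factor in the statement.

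The main obstacle will be the explicit algebraic identification of the three edge contributions, especially verifying that the $1/|\theta|$-singular diagonal term carries the coefficient $H_{12}^2$ at leading order without an uncontrolled contribution from $H_{11}$ or $H_{22}$. The key mechanism is that the direction perpendicular to the diagonal edge, along which $\nabla h_{\mathrm{aff}}|_L-\nabla h_{\mathrm{aff}}|_K$ must lie (since the affine interpolants agree on the shared edge), is proportional to $(1,1-\theta)$; evaluating the jump against this direction produces a linear combination of $H_{12}$ and $\theta H_{11}$, so the dangerous-looking contribution $H_{11}^2/|\theta|$ reduces to $|\theta|H_{11}^2$, which is harmless. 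A secondary technical point is the consistency of the 3D Delaunay choice of diagonal with the 2D one, which is precisely the content of the hypothesis $C\delta^2<|\theta|$.
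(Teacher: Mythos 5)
Your proposal is correct and mirrors the paper's proof almost step for step: project $X$ to the plane, use the Delaunay condition to force $\cT$ to be the lift $h_*\bar\cT$ of the lattice triangulation $\bar\cT$, Taylor-expand $h$ to second order at a vertex, compute the normal jumps and the $l_{KL}/d_{KL}$ ratios for the three edge types (horizontal, vertical/slanted, diagonal), and apply a weighted Cauchy--Schwarz inequality to distribute the cross terms before summing over cells (the paper sums per triangle rather than per parallelogram cell, a purely cosmetic difference). Your accounting of the role of $C\delta^2<|\theta|$ is in fact slightly sharper than what the paper writes down: you observe that the affine part of the lift stretches lengths only by $\sqrt{1+|\nabla h|^2}=1+O(\delta^2)$, hence perturbs circumradii by $O(\delta^2\eps)$, which is exactly what the hypothesis $C\delta^2<|\theta|$ is calibrated to beat; the paper's display \eqref{eq:19} records only the cruder factor $1+C\delta$ and then concludes ``for $\delta$ small enough,'' which literally read would need the stronger hypothesis $C\delta<|\theta|$.
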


\begin{proof}
  We assume without loss of generality that $\theta > 0$.
 We consider the projection of $X$ to the plane,
  \[
    \bar X:=\{(\eps k + \theta \eps l , \eps l)\in U:k,l\in\Z\}\,.
  \]
  Let $\bar\cT$ be the flat triangulated surface that consists of the triangles of the form
  \[
    \begin{split}
    \e[ ke_1+l(\theta e_1+e_2),(k+1)e_1+l(\theta e_1+e_2),ke_1+(l+1)(\theta e_1+e_2)]\\
    \text{ or } \quad \e[ ke_1+l(\theta e_1+e_2),(k+1)e_1+l(\theta e_1+e_2),ke_1+(l-1)(\theta e_1+e_2)]\,,
  \end{split}
  \]
  with $k,l\in \Z$ such that the triangles are contained in $U$, see Figure \ref{fig:upper2d_barT}.

  \begin{figure}[h]
    \centering
\includegraphics[height=5cm]{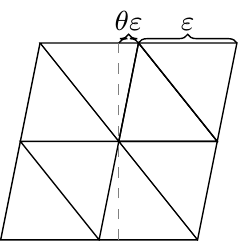}
    \caption{The flat triangulated surface $\bar \cT$. \label{fig:upper2d_barT}}
  \end{figure}
Obviously the flat triangulated surface $\bar\cT$ 
has vertex set $\bar X$. Also, we have that
    \begin{equation}\label{eq:19}
    |x-y|\leq |(x,h(x))-(y,h(y))|\leq (1+C\delta)|x-y|
  \end{equation}
  for all $x,y \in \bar X$. We claim that  for $\delta$ chosen small enough, we have the implication
    \begin{equation}\label{eq:18}
h_*K=[(x,h(x)),(y,h(y)),(z,h(z))]\in \cT\quad \Rightarrow  \quad K= [x,y,z]\in \bar\cT \,.
\end{equation}
Indeed, if $K\not\in  \bar  \cT$, then either $r(K)>\frac32\e$ or there exists $w\in X$ with $|w-q(K)|<(1 -C\theta)r(K)$. In the first case,  $r(h_*K)>(1-C\delta)\frac32\e$ by \eqref{eq:19}  and hence  $h_*K\not\in \cT$ for $\delta$ small enough.  In the second case, we have  by \eqref{eq:19} and Lemma \ref{lma: circumcenter regularity} that
\[
  |(w,h(w))-q(h_*K)|<(1+C\delta)(1 -C\theta)r(h_*K)\,,
\]
and hence $h_*K$ does not satisfy the Delaunay property for $\delta$ small enough. This proves \eqref{eq:18}.

Let $[x,y]$ be an edge with either $x,y \in X$ or $x,y \in \bar X$. We call this edge \emph{horizontal} if $(y-x) \cdot e_2 = 0$, \emph{vertical} if $(y-x) \cdot (e_1 - \theta e_2)= 0$, and \emph{diagonal} if $(y-x) \cdot (e_1 + (1-\theta) e_2) = 0$.
By its definition, $\bar \cT$ consists only of triangles with exactly one horizontal, vertical, and diagonal edge each. By what we have just proved, 
the same is true for $\cT$.



\medskip

To calculate the differences between normals of adjacent triangles, let us consider one fixed triangle $K\in \cT$ and its neighbors  $K_1,K_2,K_3$, with which $K$ shares a horizontal, diagonal and vertical  edge  respectively, see Figure \ref{fig:upper2d}.

\begin{figure}[h]
\includegraphics[height=5cm]{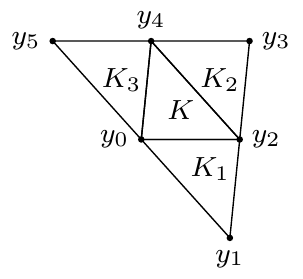}
\caption{Top view of a triangle $K\in\cT$ with its  horizontal, diagonal and vertical  neighbors $K_1,K_2,K_3$. \label{fig:upper2d}} 
\end{figure}

We  assume without loss of generality that one of the vertices of $K$ is the origin. We write
$x_0=(0,0)$, $x_1=\e(1-\theta,-1)$, $x_2=\e(1,0)$, $x_3=\e(1+\theta,1)$, $x_4=\e(\theta,1)$, $x_5=\e(\theta-1,1)$, and $y_i=(x_i, h(x_i))$ for $i=0,\dots,5$. With this notation we have $K=[y_0,y_2,y_4]$, $K_1=[y_0,y_1,y_2]$, $K_2=[y_2,y_3,y_4]$ and $K_3=[y_4,y_5,y_0]$. See Figure \ref{fig:upper2d_barT}.  As approximations of the normals, we define
\[
  \begin{split}
  v(K)&=\e^{-2}y_2\wedge y_4\,\\
  v(K_1)&=\e^{-2} y_1\wedge y_2\\
  v(K_2)&= \e^{-2}(y_3-y_2)\wedge(y_4-y_2)\\
  v(K_3)&= \e^{-2} y_4\wedge y_5\,.
\end{split}
  \]
Note that $v(L)$ is parallel to $n(L)$ and $|v(L)|\geq 1$ for $L\in \{K,K_1,K_2,K_3\}$. 

Hence for $i=1,2,3$, we have that
\[
  |n(K)-n(K_i)|^2\leq |v(K)-v(K_i)|^2\,.
\]
For each $x_i$, we write
\[
  h(x_i)= x_i \cdot \nabla h(0) + \frac12 x_i \nabla^2 h(0) x_i^T+O(\e^3)\,,
\]
where $O(\e^3)$ denotes terms $f(\e)$ that satisfy
$\limsup_{\e\to 0}\e^{-3}|f(\e)|<\infty$.
By an explicit computation we obtain that
\[
  \begin{split}
    \left|v(K)-v(K_1)\right|^2&= \e^2\left|(\theta-1)\theta \partial_{11} h+2(\theta-1)\partial_{12}h+\partial_{22}h\right|^2+O(\e^3)\\
    \left|v(K)-v(K_2)\right|^2&= \e^2\left(\left|\theta\partial_{11} h+\partial_{12}h\right|^2+\left|(\theta-1)\theta\partial_{11} h+(\theta-1)\partial_{12}h\right|^2\right)+O(\e^3)\\
    \left|v(K)-v(K_3)\right|^2&=\e^2\left( \theta^2\left|(\theta-1)\partial_{11} h+\partial_{12}h\right|^2+\left|(\theta-1)\partial_{11} h+\partial_{12}h\right|^2\right)+O(\e^3)\,,
\end{split}
\]
where all derivatives of $h$ are taken at $0$.
Using the Cauchy-Schwarz inequality and $|1-\theta|\leq 1$, we may estimate the term on the right hand side in the first line above,
\[
  \left|(\theta-1)\theta\partial_{11} h+2(\theta-1)\partial_{12}h+\partial_{22}h\right|^2
\leq (1+\theta) |\partial_{22}h|^2+ \left(1+\frac{C}{\theta}\right)\left(\theta^2 |\partial_{11} h|^2+|\partial_{12}h|^2\right)\,.
\]
In a similar same way, we have
\[
  \begin{split}
  \left|\theta\partial_{11} h+\partial_{12}h\right|^2+\left|(\theta-1)\theta\partial_{11} h+(\theta-1)\partial_{12}h\right|^2&\leq C(|\partial_{12}h|^2
  +\theta^2|\partial_{11} h|^2)\\
  \theta^2\left|(\theta-1)\partial_{11} h+\partial_{12}h\right|^2+\left|(\theta-1)\partial_{11} h+\partial_{12}h\right|^2&\leq (1+\theta)|\partial_{11} h|^2+\frac{C}{\theta}|\partial_{12}h|^2\,,
\end{split}
  \]
so that
\[
  \begin{split}
    \left|n(K)-n(K_1)\right|^2&\leq \e^2(1+\theta) |\partial_{22}h|^2+ C\e^2 \left(\theta |\partial_{11} h|^2+ \frac1\theta |\partial_{12}h|^2\right)+O(\e^3)\\
    \left|n(K)-n(K_2)\right|^2&\leq C\e^2 (|\partial_{12}h|^2
  +\theta^2|\partial_{11} h|^2)+O(\e^3)\\
    \left|n(K)-n(K_3)\right|^2&\leq \e^2(1+\theta)|\partial_{11} h|^2+\frac{C}{\theta}\e^2|\partial_{12}h|^2+O(\e^3)\,,
\end{split}
\]

  Also, we have by Lemma \ref{lma: circumcenter regularity} that
\[
  \begin{split}
  \frac{l_{KK_1}}{d_{KK_1}}&\leq 1+C(\delta+\e+\theta)\\
  \frac{l_{KK_2}}{d_{KK_2}}&\leq \left(1+C(\delta+\e+\theta)\right) \frac{C}{\theta}\\
  \frac{l_{KK_3}}{d_{KK_3}}&\leq 1+C(\delta+\e+\theta)\,.
\end{split}
\]
Combining all of the above, and summing up over all triangles in $\cT$, we obtain the statement of the proposition.
\end{proof}

\subsection{Global triangulations}

We are going to use a known fact about triangulations of point sets in $\R^2$, and transfer them to $\R^3$. We first cite a result for planar Delaunay triangulations, Theorem \ref{thm: planar Delaunay} below, which can be found in e.g. \cite[Chapter 9.2]{berg2008computational}. This theorem states the existence of a Delaunay triangulated surface associated to a \emph{protected} set of points.

\begin{defi}
  Let $N\subset\R^3$ be compact, $X\subset N$  a finite set of points and
  \[
    D(X,N)=\max_{x\in N}\min_{y\in X}|x-y|\,.
  \]
  We say that $X$ is $\bar \delta$-protected if whenever $x,y,z \in X$ form a regular triangle $[x,y,z]$ with circumball $\overline{B(q,r)}$ satisfying $r \leq D(X,N)$, then $\left| |p-q| - r \right| \geq \bar\delta$ for any $p\in X \setminus \{x,y,z\}$.
\end{defi}

\begin{thm}\label{thm: planar Delaunay}[\cite{berg2008computational}]
  Let $\alpha > 0$. Let $X\subset \R^2$ be finite and not colinear. Define $\Omega := \conv(X)$. Assume that
  \[\min_{x\neq y \in X} |x-y| \geq \alpha D(X,\Omega)\,,
  \]
  and that $X$ is $\delta D(X,\Omega)$-protected for some $\delta>0$. Then there exists a unique maximal Delaunay triangulated surface $\cT$ with vertex set $X$, given by all regular triangles $[x,y,z]$, $x,y,z\in X$, with circumdisc $\overline{B(q,r)}$ such that $B(q,r) \cap X = \emptyset$.

  The triangulated surface $\cT$ forms a partition of $\Omega$, in the sense that
  \[
    \sum_{K\in \cT} \mathds{1}_K = \mathds{1}_\Omega\quad \Hm^2\text{almost everywhere}\,,
  \]
  where $\mathds{1}_A$ denotes the characteristic function of $A\subset \R^3$.
  Further, any triangle $K\in \cT$ with $\dist(K,\partial \Omega) \geq 4D(X,\Omega)$ is $c(\alpha)$-regular, and $d_{KL} \geq \frac{\delta}{2} D(X,\Omega)$ for all pairs of triangles $K \neq L \in \cT$.
\end{thm}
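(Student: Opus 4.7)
The plan is to treat the theorem in three layers: (a) existence, uniqueness, and the partition property of $\cT$; (b) interior regularity; and (c) the separation of circumcenters $d_{KL} \geq \frac{\delta}{2}D$, where I write $D := D(X, \Omega)$. Part (a) is classical and I would recover it from the paraboloid lift $x \mapsto (x, |x|^2) \in \R^3$: the lower faces of $\conv\{(x, |x|^2) : x \in X\}$ project to triangles with empty open circumdiscs whose union tiles $\conv(X) = \Omega$, so that both existence and the partition property drop out of this one construction. Uniqueness is equivalent to saying no four points of $X$ lie on a common empty circle, which is guaranteed as soon as $\delta > 0$ in the $\delta D$-protection hypothesis.

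The technical heart of (b) is the bound $r(K) \leq D$ for every $K \in \cT$ with $\dist(K, \partial\Omega) \geq 4D$. I would prove this by a two-step bootstrap. Write $q := q(K)$, $r := r(K)$, and pick any vertex $v$ of $K$. Suppose first $r > 4D$: set $q' := v + (4D/r)(q - v)$, so that $|q' - v| = 4D$ and $q' \in \overline{B(v, 4D)} \subset \Omega$; by the definition of $D$ one can pick $p \in X$ with $|p - q'| \leq D$, and then $|p - q| \leq D + (r - 4D) < r$ contradicts emptiness of $B(q, r)$. Hence $r \leq 4D$, which places $q \in \overline{B(v, r)} \subset \overline{B(v, 4D)} \subset \Omega$; a second application of the definition of $D$ at the point $q \in \Omega$ itself then forces $r \leq D$. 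Given $r \leq D$, the law of sines yields $\sin\theta \geq \alpha D/(2r) \geq \alpha/2$ for the minimum angle $\theta$ of $K$, so every angle of $K$ is at least $\arcsin(\alpha/2)$, which together with $\alpha D \leq \diam(K) \leq 2r \leq 2D$ delivers $c(\alpha)$-regularity.

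For part (c), let $K, L$ be adjacent Delaunay triangles with shared edge $[x, y]$ and opposite vertices $z \in K$, $p \in L$. Whenever $r(K) \leq D$ (which holds in particular for interior $K$ by (b), and more generally whenever $q(K) \in \Omega$), the $\delta D$-protection applied to $[x, y, z]$ gives $\left| |p - q(K)| - r(K) \right| \geq \delta D$, and Lemma \ref{lma: circumcenter regularity} then yields $d_{KL} \geq \frac12 \left| |p - q(K)| - r(K) \right| \geq \frac{\delta}{2}D$. The main obstacle I anticipate is the bootstrap in (b): the threshold $4D$ is precisely calibrated so that one can leap from the $r > 4D$ contradiction to $r \leq 4D$ and then to $q \in \Omega$, and without this buffer the argument would be circular. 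The corresponding subtlety in (c) is to handle pairs near $\partial\Omega$ for which neither $q(K)$ nor $q(L)$ lies in $\Omega$, where the protection hypothesis does not apply directly and one must instead argue geometrically from the edge-length bound together with the convexity of $\Omega$.
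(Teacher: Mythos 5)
The paper does not prove this theorem---it is cited as a black box from \cite{berg2008computational}---so there is no in-paper argument to compare against. On its own terms, your decomposition is natural, the paraboloid-lift argument for existence and the partition property is the standard route, and the two-step bootstrap in (b) is correct and is the key technical step: $r > 4D$ is contradicted by placing a probe point at distance $4D$ from a vertex of $K$ towards $q(K)$, whence $r \leq 4D$, whence $q(K)\in\Omega$, whence $r\leq D$; a threshold of $2D$ would in fact already suffice. The law-of-sines bound for the minimum angle together with the diameter sandwich $\alpha D \leq \diam(K) \leq 2D$ then yields the claimed $c(\alpha)$-regularity.

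The gap you flag in (c) is genuine and is not a routine boundary cleanup. The $\delta D$-protection hypothesis, as defined in the paper, only constrains triangles with circumradius $r \leq D$; boundary Delaunay triangles can have arbitrarily large circumradii, and for an adjacent pair $K=[x,y,z]$, $L=[x,y,p]$ with $r(K)>D$ and $q(K)\notin\Omega$, nothing in the hypothesis prevents $|p-q(K)|$ from equalling $r(K)$, i.e.\ $d_{KL}=0$; the geometric argument you gesture at (``from the edge-length bound together with the convexity of $\Omega$'') is not spelled out and does not obviously produce the quantitative bound $\delta D/2$. The statement also claims the bound for \emph{all} pairs $K\neq L$, whereas Lemma~\ref{lma: circumcenter regularity} and your protection argument apply only to adjacent pairs. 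The same issue touches uniqueness and the partition property in (a): protection with $r\leq D$ does not preclude four cocircular points of $X$ on a large \emph{empty} circle near $\partial\Omega$, which would break both. In the paper's own use of the theorem, in the proof of Proposition~\ref{prop: Delaunay existence}, only adjacent pairs with circumradius at most $D$ arise, so your argument does cover what is actually needed downstream; but as a proof of the theorem as literally stated, the boundary cases in (a) and (c) remain open.
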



We are now in position to formulate sufficient conditions for a vertex set to yield a triangulated surface that serves our purpose.

\begin{prop}\label{prop: Delaunay existence}
Let $N\subset\R^3$ be a 2-dimensional compact smooth manifold, and let $\alpha, \delta > 0$. Then there is $\eps = \eps(N,\alpha,\delta)>0$ such that whenever $X\subset N$ satisfies
\begin{itemize}
\item [(a)]$D(X,N)  \leq \eps$,
\item [(b)] $\min_{x,y\in X} |x-y| \geq \alpha D(X,N)$,
\item [(c)] $X$ is $\delta D(X,N)$-protected;
\end{itemize}

then there exists a triangulated surface $\cT(X,N)$ with the following properties:
\begin{itemize}
\item [(i)] $\size(\cT(X,N)) \leq 2D(X,N)$.
\item [(ii)] $\cT(X,N)$ is $c(\alpha)$-regular.
\item [(iii)] $\cT(X,N)$ is Delaunay.
\item [(iv)] Whenever $K\neq L \in \cT(X,N)$, we have $d_{KL} \geq \frac{\delta}{2} D(X,N)$.
\item [(v)] The vertex set of $\cT(X,N)$ is $X$.
\item [(vi)] $\cT(X,N)$ is a $C(\alpha, N)D(X,N)$-Lipschitz graph over $N$. In particular, $\cT(X,N)$ is homeomorphic to $N$.
\end{itemize}
\end{prop}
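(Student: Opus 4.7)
\emph{Strategy.} The plan is to reduce to the planar Delaunay result (Theorem \ref{thm: planar Delaunay}) via tangent-plane projections and then glue the local triangulations into a global one. By smoothness and compactness of $N$, there exists $r_0 = r_0(N) > 0$ such that for every $x_0 \in N$, the orthogonal projection $\pi_{x_0}: \R^3 \to x_0 + T_{x_0} N$ restricts to a bi-Lipschitz map on $N \cap B(x_0, r_0)$ with constant $1 + C r_0$, and $N$ is realized locally as the graph over $T_{x_0}N$ of a $C r_0$-Lipschitz function. I choose $\eps = \eps(N, \alpha, \delta) \ll r_0$ so that all curvature errors below are absorbed, and write $\bar\eps := D(X,N)$.

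\emph{Comparison of 3D and 2D Delaunay.} For any triangle $K = [x, y, z]$ with $x, y, z \in X$ and $\diam(K) \leq 4\bar\eps$, assumption (b) ensures $c(\alpha)$-regularity, and the affine plane through $x, y, z$ makes angle $O(\bar\eps)$ with $T_{x_0} N$ for any $x_0 \in X$ within distance $5\bar\eps$ of $x$. Lemma \ref{lma: circumcenter regularity} combined with this regularity bounds the difference between the 3D circumcenter/radius $(q(K), r(K))$ and the planar circumcenter/radius $(\tilde q, \tilde r)$ of the projected triangle $[\pi_{x_0}(x), \pi_{x_0}(y), \pi_{x_0}(z)]$ by $C(\alpha)\bar\eps^2$; similarly $\bigl| |p - q(K)| - |\pi_{x_0}(p) - \tilde q| \bigr| \leq C(\alpha)\bar\eps^2$ for every $p \in X \cap B(x_0, C\bar\eps)$. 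Provided $\eps$ is small enough, these errors are $\ll \delta\bar\eps$, so the hypotheses (a)--(c) on $X$ transfer to the hypotheses of Theorem \ref{thm: planar Delaunay} on $Y_{x_0} := \pi_{x_0}(X \cap B(x_0, C\bar\eps))$ with separation constant $\alpha/2$ and protection constant $\delta/2$.

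\emph{Global construction and verification.} Define
\[
\cT(X, N) := \bigl\{ K = [x, y, z] : x, y, z \in X \text{ distinct}, \, r(K) \leq 2\bar\eps, \, B(q(K), r(K)) \cap X = \emptyset \bigr\}.
\]
For each $x_0 \in X$, Theorem \ref{thm: planar Delaunay} yields a unique planar Delaunay triangulation $\tilde\cT_{x_0}$ of $Y_{x_0}$; by the comparison above, its lift via $\pi_{x_0}^{-1}$ on vertices coincides exactly with the star of $x_0$ in $\cT(X, N)$. This local uniqueness yields the 2-manifold structure and the vertex/edge intersection conditions of Definition \ref{def:triangular_surface}(ii). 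Properties (i), (iii), (v) hold by construction; (ii) transfers from the planar Delaunay triangulations via the bi-Lipschitz projection; (iv) follows from its planar analogue and Lemma \ref{lma: circumcenter regularity}. For (vi), locally near each $x \in N$ the surface $\cT(X, N)$ is the graph over $T_{x} N$ of a piecewise affine function interpolating the $C r_0$-Lipschitz function whose graph is $N$ on the projected vertices; the interpolant has Lipschitz constant $C(\alpha, N)\bar\eps$, and Lemma \ref{lma: graph property} promotes this to a global Lipschitz graph over $N$, which also yields the homeomorphism $\cT(X, N) \cong N$.

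\emph{Main difficulty.} The crux is the 3D-versus-2D Delaunay comparison: the $O(\bar\eps^2)$ curvature-induced errors in circumradii, circumcenters, and point--circumcenter distances must be controlled below the $\delta\bar\eps$ protection margin so that the planar Delaunay triangulation of $Y_{x_0}$ is unique and its lift is 3D-Delaunay. This tradeoff between $\bar\eps^2$ and $\delta\bar\eps$ is precisely what fixes $\eps = \eps(N,\alpha,\delta)$.
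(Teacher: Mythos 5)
Your overall route is the same as the paper's: project locally onto tangent planes, compare with the planar Delaunay triangulation of Theorem~\ref{thm: planar Delaunay}, and use the $O(\bar\eps^2)$ curvature error versus the $\delta\bar\eps$ protection margin as the mechanism that fixes $\eps(N,\alpha,\delta)$. The one structural difference is your starting definition of $\cT(X,N)$: you take all triangles from $X$ with $r(K)\leq 2D(X,N)$ and empty 3D circumball, whereas the paper uses the restricted Voronoi construction (include $[x,y,z]$ iff the Euclidean Voronoi cells $V_x,V_y,V_z$ meet on $N$, at some $\tilde q\in N$ with radius $\tilde r=|\tilde q-x|$). Under hypothesis (c) the two sets of triangles coincide, but the paper's definition buys you the auxiliary point $\tilde q\in N$, which makes (i)--(iv) entirely elementary and independent of Theorem~\ref{thm: planar Delaunay}: for instance $\tilde r \leq D(X,N)$ is immediate from the covering property, so $\diam K\leq 2\tilde r\leq 2D(X,N)$ with no further work. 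In your version Theorem~\ref{thm: planar Delaunay} carries all the weight, including manifoldness, which is doable but pushes more of the argument into the local-matching claim (the lift of the planar Delaunay coincides with the star of $x_0$); that claim is exactly the two-sided implication the paper proves in the (v)--(vi) step.

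One concrete imprecision: you assert that property (i), $\size(\cT(X,N))\leq 2D(X,N)$, ``holds by construction,'' but your circumradius cap $r(K)\leq 2D(X,N)$ only gives $\diam K \leq 4D(X,N)$ directly. You need the extra observation that $q(K)$ lies within $O(\bar\eps^2)$ of $N$ (since $K$ is nearly tangent to $N$), so the covering hypothesis (a) places a point of $X$ within $D(X,N)+O(\bar\eps^2)$ of $q(K)$; the empty-ball condition then forces $r(K)\leq D(X,N)+O(\bar\eps^2)$, which recovers (i) up to a negligible correction. The paper's restricted-Voronoi definition avoids this step because $\tilde q\in N$ is built into the definition and $\tilde r\leq D(X,N)$ is immediate.
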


The surface case we treat here  can be viewed as a perturbation of Theorem \ref{thm: planar Delaunay}. We note that the protection property (c) is vital to the argument. A very similar result to Proposition \ref{prop: Delaunay existence} was proved in \cite{boissonnat2013constructing}, but we present a self-contained proof here.

\begin{proof}[Proof of Proposition \ref{prop: Delaunay existence}]
We construct the triangulated surface $\cT(X,N)$ as follows: Consider all regular triangles $K=[x,y,z]$ with $x,y,z\in X$ such that the Euclidean Voronoi cells $V_x,V_y,V_z$ intersect in $N$, i.e. there is $\tilde q \in N$ such that $|\tilde q - x| = |\tilde q - y| = |\tilde q - z| \leq |\tilde q - p|$ for any $p\in X\setminus \{x,y,z\}$.

\emph{Proof of (i):} Let $[x,y,z]\in \cT(X,N)$. Let $\tilde q \in V_x \cap V_y \cap V_z \cap N$, set $\tilde r := |\tilde q - x|$. Then $\tilde r = \min_{p\in X} |\tilde q - p| \leq D(X,N)$, and because $[x,y,z]\subset \overline{B(\tilde q, \tilde r)}$ we have $\diam([x,y,z])\leq 2 \tilde r \leq 2D(X,N)$.

\emph{Proof of (ii):} Let $\overline{B(q,r)}$ denote the Euclidean circumball of $[x,y,z]$. Then $r\leq \tilde r$ by the definition of the circumball. Thus $\min(|x-y|,|x-z|,|y-z|) \geq \alpha r$, and $[x,y,z]$ is $c(\alpha)$-regular by the following argument: Rescaling such that $r = 1$, consider the class of all triangles $[x,y,z]$ with $x,y,z \in S^1$, $\min(|x-y|,|x-z|,|y-z|) \geq \alpha$. All these triangles are $\zeta$-regular for some $\zeta>0$, and by compactness there is a least regular triangle in this class. That triangle's regularity is $c(\alpha)$.

\emph{Proof of (iii):} Because of (ii), $N\cap \overline{B(q,r)}$ is a $C(\alpha, N)\eps$-Lipschitz graph over a convex subset $U$ of the plane $ x + \R(y-x) + \R(z-x)$, say $N\cap \overline{B(q,r)} = U_h$. It follows that $\tilde q - q = h(\tilde q) n_U$. Because $h(x)= 0$, it follows that $|\tilde q - q| = |h(\tilde q)| \leq C(\alpha, N) D(X,N)^2$.

Thus, for $D(X,N) \leq \delta(2C(\alpha,N))^{-1}$, we have that $|\tilde q - q| \leq \frac{\delta}{2}D(X,N)$. This together with (c) suffices to show the Delaunay property of $\cT(X,N)$: Assume there exists $p\in X \setminus \{x,y,z\} \cap B(q,r)$. Then by (c) we have $|p-q| \leq r - \delta D(X,N)$, and by the triangle inequality $|p-\tilde q| \leq |p- q| + \frac{\delta}{2}D(x,N) < \tilde r$, a contradiction.

\emph{Proof of (iv):}
It follows also from (c) and Lemma \ref{lma: circumcenter regularity} that for all adjacent $K,L\in \cT(X,N)$ we have $d_{KL} \geq \frac{\delta}{2} D(X,N)$.

\emph{Proof of (v) and (vi):} Let $\eta>0$, to be fixed later. There is $s>0$ such that for every $x_0\in N$, the orthogonal projection $\pi:\R^3 \to x_0 + T_{x_0}N$ is an $\eta$-isometry when restricted to $N\cap B(x_0,s)$, in the sense that that $|D\pi - \id_{TN}|\leq \eta$.

Let us write $X_\pi=\pi(X\cap B(x_0,s))$. This point set fulfills all the requirements of Theorem \ref{thm: planar Delaunay} (identifying $x_0+T_{x_0}N$ with $\R^2$), except for possibly protection.
We will prove below that
  \begin{equation}\label{eq:23}
  X_\pi\text{ is } \frac{\delta}{4}D(X,N) \text{-protected}.
\end{equation}
We will then consider the planar Delaunay triangulated surface $\cT' \coloneqq \cT(X_\pi, x_0 + T_{x_0}N)$,  and show that for $x,y,z\in B(x_0,s/2)$ we have
  \begin{equation}\label{eq:22}
  K:=[x,y,z]\in \cT(X,N)\quad \Leftrightarrow \quad K_\pi:=[\pi(x),\pi(y),\pi(z)]\in \cT'\,
\end{equation}
If we prove these claims, then (v) follows from Theorem \ref{thm: planar Delaunay}, while (vi) follows from Theorem \ref{thm: planar Delaunay} and Lemma \ref{lma: graph property}.

\medskip

We first  prove \eqref{eq:23}: Let $\pi(x),\pi(y),\pi(z)\in X_\pi$, write  $K_\pi= [\pi(x),\pi(y),\pi(z)]$, and assume $r(K_\pi)\leq D(X_\pi,\mathrm{conv}(X_{\pi}))$. For a contradiction, assume that $\pi(p)\in X_\pi\setminus \{\pi(x),\pi(y),\pi(z)\}$ such that
\[
  \left||q(K_\pi)-\pi(p)|-r(K_\pi)\right|<\frac{\delta}{4}D(X,N)\,.
\]
Using again $|D\pi-\id_{TN}|<\eta$ and Lemma \ref{lma: circumcenter regularity},
we obtain, with $K=[x,y,z]$,
\[
  \left||q(K)-p|-r(K)\right|<(1+C\eta)\frac{\delta}{4}D(X,N)\,.
\]
Choosing $\eta$ small enough, we obtain a contradiction to (c). This completes the proof of \eqref{eq:23}.

\medskip

Next we show  the  implication $K\in \cT\Rightarrow K_\pi\in \cT'$: Let $p\in X\cap B(x_0,s) \setminus \{x,y,z\}$. 
Assume for a contradiction that $\pi(p)$ is contained in the circumball of $K_\pi$,
\[
|\pi(p) - q(K_\pi)|\leq r(K_\pi)\,.
\] Then by $|D\pi-\id_{TN}|<\eta$ and Lemma \ref{lma: circumcenter regularity}\,,
\[
  |p-q(K)|\leq r(K) + C(\alpha)\eta D(X,N)\,.
\]
Choosing $\eta<\delta/(2C(\alpha))$, we have by (c) that
\[|p-q(K)| \leq r(K) - \delta D(X,N)\,,
\]
which in turn implies $|p-\tilde q| < \tilde r$.
This is a contradiction to $\tilde q \in V_x \cap V_y \cap V_z$, since $p$ is closer to $\tilde q$ than any of $x,y,z$. This shows $K_\pi\in \cT'$.

Now we show the implication $K_\pi\in \cT'\Rightarrow K\in \cT$: Let $x,y,z\in X\cap B(x_0,s/2)$ with $[\pi(x),\pi(y),\pi(z)]\in \cT'$. Let $p\in X\cap B(x_0,s) \setminus \{x,y,z\}$. Assume for a contradiction that $|p-\tilde q| \leq \tilde r$. Then again by Lemma \ref{lma: circumcenter regularity} we have
\[
|p - \tilde q| < \tilde r \Rightarrow |p-q| < r + \delta D(X,N) \Rightarrow |p-q| \leq r - \delta D(X,N) \Rightarrow |\pi(p) - q'| < r'.
\]
Here again we used (c) and the fact that $D(X,N)$ is small enough. The last inequality is a contradiction, completing the proof of \eqref{eq:22},  and hence the proof of the present proposition.
\end{proof}

\begin{rem}
A much shorter proof exists for the case of the two-sphere, $N = \mathcal{S}^2$. Here, any finite set $X\subset \mathcal{S}^2$ such that no four points of $X$ are coplanar and every open hemisphere contains a point of $X$ admits a Delaunay triangulation homeomorphic to $\mathcal{S}^2$, namely $\partial \conv(X)$.

Because no four points are coplanar, every face of $\partial \conv(X)$ is a regular triangle $K = [x,y,z]$. The circumcircle of $K$ then lies on $\mathcal{S}^2$ and $q(K) = n(K)|q(K)|$, where $n(K)\in \mathcal{S}^2$ is the outer normal. (The case $q(K)=-|q(K)|n(K)$ is forbidden because the hemisphere $\{x\in \mathcal{S}^2\,:\,x \cdot n(K)>0\}$ contains a point in $X$.) To see that the circumball contains no other point $p\in X\setminus \{x,y,z\}$, we note that since $K\subset \partial \conv(X)$ we have $(p-x)\cdot n(K)< 0$, and thus $|p-q(K)|^2 = 1 + 1 - 2p \cdot q(K) > 1 + 1 - 2x \cdot q(K) = |x-q(K)|^2$.

Finally, $\partial \conv(X)$ is homeomorphic to $\mathcal{S}^2$ since $\conv(X)$ contains a regular tetrahedron.
\end{rem}

We are now in a position to prove the upper bound of our main theorem, Theorem \ref{thm:main} (ii).

\begin{figure}
\includegraphics[height=5cm]{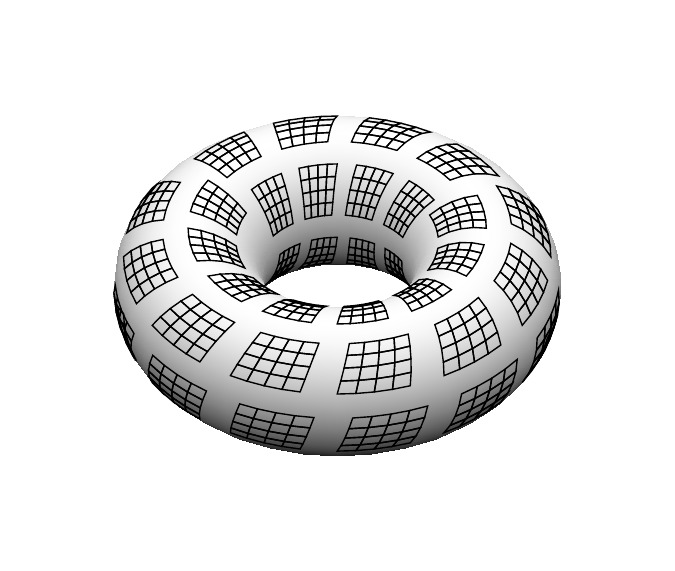}
\caption{The global triangulation of a smooth surface is achieved by first covering a significant portion of the surface with the locally optimal triangulation, then adding additional points in between the regions, and finally finding a global Delaunay triangulation. \label{fig:upper bound}} 
\end{figure}

\begin{proof}[Proof of Theorem \ref{thm:main} (ii)]
We first note that it suffices to show the result for $h\in C^3(M)$ with  $\|h\|_\infty < \frac{\delta(M)}{2}$. To see this, we approximate in the general case $h\in W^{2,2}(M)\cap W^{1,\infty}(M)$, $\|h\|_\infty \leq \frac{\delta(M)}{2}$ by smooth functions $h_{\beta} :=  H_\beta h$, where $(H_\beta)_{\beta >0 }$ is the heat semigroup. Clearly $H_\beta h \in C^\infty(M)$, and $\nabla H_\beta h \to \nabla h$ uniformly, so that $\|h\|_{\infty}\leq \frac{\delta}{2}$ and $\|\nabla h_{\beta}\|_\infty <\|\nabla h\|_{\infty}+1$ for $\beta$ small enough. 

Then 
\[
\int_M f(x,h_\beta(x),\nabla h_\beta(x), \nabla^2 h_\beta) \,\d\Hm^2 \to \int_M f(x,h(x),\nabla h(x), \nabla^2 h) \,\d\Hm^2
\]
for $\beta\to 0$ whenever
\[f:M \times [-\delta(M)/2, \delta(M)/2] \times  B(0,\|\nabla h\|_{\infty}+1)  \times (TM \times TM) \to \R
\]
is continuous with quadratic growth in $\nabla^2 h$.  The Willmore functional
\[
  h\mapsto \int_{M_h} |Dn_{M_h}|^2\d\Hm^2\,,
\]
which is our limit functional, may be written in this way. This proves our claim that we may reduce our argument to the case $h\in C^3(M)$, since the above approximation allows for the construction of suitable diagonal sequences in the strong $W^{1,p}$ topology, for every $p<\infty$.

\medskip

For the rest of the proof we fix $h\in C^3(M)$. We  choose a parameter $\delta>0$. By compactness of $M_h$, there is a finite family of pairwise disjoint closed
open sets  $(Z_i)_{i\in I}$ such that
\[
\Hm^2\left(M_h \setminus \bigcup_{i\in I} Z_i\right) \leq \delta
\]
and such that, after applying a rigid motion $R_i:\R^3\to \R^3$, the surface $R_i(M_h \cap Z_i)$ is the graph of a function $h_i\in C^2(U_i)$ for some open sets $(U_i)_{i\in I}$ with $\|\nabla h_i\|_\infty \leq \delta$ and $\|\nabla^2 h_i - \diag(\alpha_i,\beta_i)\|_\infty \leq \delta$.

\medskip

We can apply Proposition \ref{prop: local triangulation} to $R_i(M_h \cap Z_i)$ with global parameters $\theta := \delta$ and $\eps>0$ such that $\dist(Z_i,Z_j)>2\e$ for $i\neq j$, yielding point sets $X_{i,\eps}\subset M_h \cap B_i$. The associated triangulated surfaces $\cT_{i,\e}$ (see \ref{fig:upper bound}) have the Delaunay property, have vertices $X_{i,\eps}$ and maximum circumball radius at most $\eps$. Furthermore, we have that
\begin{equation}\label{eq: sum local interactions}
\begin{aligned}
  \sum_{i\in I} &\sum_{K,L\in \cT_{i,\eps}} \frac{l_{KL}}{d_{KL}} |n(K) - n(L)|^2\\
  & \leq (1+C(\delta+\e)) \sum_{i\in I} \L^2(U_i)\times\\
  &\quad \times \left(\max_{x\in U_i}|\partial_{11}h_i(x)|^2+\max_{x\in U_i}|\partial_{22}h_i(x)|^2+\delta^{-1}\max_{x\in U_i}|\partial_{12}h_i(x)|^2\right)+C\e\\
 &\leq (1+C(\delta+\e)) \sum_{i\in I} \int_{M_h \cap Z_i} |Dn_{M_h}|^2 \,\d\Hm^2+C(\e+\delta)\,,
\end{aligned}
\end{equation}
where in the last line we have used  $\|\nabla h_i\|_{\infty}\leq \delta$, $\|\dist(\nabla^2h_i,\diag(\alpha_i,\beta_i)\|_{\infty}\leq \delta$, and the identity
\[
  \begin{split}
\int_{M_h \cap Z_i} |Dn_{M_h}|^2 \,\d\Hm^2&=
\int_{(U_i)_{h_i}}|Dn_{(U_i)_{h_i}}|^2\d\H^2\\
&=\int_{U_i}\left|(\mathbf{1}_{2\times 2}+\nabla h_i\otimes \nabla h_i)^{-1}\nabla^2 h_i\right|^2(1+|\nabla h_i|^2)^{-1/2}\d x\,.
\end{split}
  \]

We shall use the point set $Y_{0,\eps} := \bigcup_{i\in I} X_{i,\eps}$ as a basis for a global triangulated surface. We shall successively augment the set by a single point $Y_{n+1,\eps} := Y_{n,\eps} \cup \{p_{n,\eps}\}$ until the construction below terminates after finitely many steps. We claim that we can choose the points $p_{n,\eps}$ in such a way that for every $n\in\N$ we have
\begin{itemize}
\item [(a)] $\min_{x,y\in Y_{n,\eps}, x\neq y} |x-y| \geq \frac{\eps}{2}$.
\item [(b)] Whenever $x,y,z,p\in Y_{n,\eps}$ are four distinct points such that the circumball $\overline{B(q,r)}$ of $[x,y,z]$ exists and has $r\leq \eps$, then
\[
\left| |p-q| - r \right| \geq \frac{\delta}{2} \eps.
\]
If at least one of the four points $x,y,z,p$ is not in $Y_{0,\eps}$, then
  \begin{equation}\label{eq:21}
\left| |p-q| - r \right| \geq c \eps,
\end{equation}
where $c>0$ is a universal constant.
\end{itemize}

First, we note that both (a) and (b) are true for $Y_{0,\eps}$.

Now, assume we have constructed $Y_{n,\eps}$. If there exists a point $x\in M_h$ such that $B(x,\eps) \cap Y_{n,\eps} = \emptyset$, we consider the set $A_{n,\eps}\subset M_h \cap B(x,\frac{\eps}{2})$ consisting of all points $p\in M_h \cap B(x,\frac{\eps}{2})$ such that for all regular triangles $[x,y,z]$ with $x,y,z\in Y_{n,\eps}$ and circumball $\overline{B(q,r)}$ satisfying $r\leq 2\eps$, we have $\left||p-q| - r\right| \geq c \eps$.

Seeing as how $Y_{n,\eps}$ satisfies (a), the set $A_{n, \eps}$ is nonempty if $c>0$ is chosen small enough, since for all triangles $[x,y,z]$ as above we have
\[
\Hm^2\left(\left\{ p\in B(x,\frac{\eps}{2})\cap M_h\,:\,\left||p-q| - r\right| < c \eps \right\}\right) \leq 4c \eps^2,
\]
and the total number of regular triangles $[x,y,z]$ with $r\leq 2\eps$ and $\overline{B(q,r)}\cap B(x,\eps)\neq \emptyset$ is universally bounded as long as $Y_{n,\eps}$ satisfies (a).

We simply pick $p_{n,\eps}\in A_{n,\eps}$, then clearly $Y_{n+1,\eps} \coloneqq Y_{n,\eps} \cup \{p_{n,\eps}\}$ satisfies (a) by the triangle inequality. We now have to show that $Y_{n+1,\eps}$ still satisfies (b).

This is obvious whenever $p = p_{n,\eps}$ by the definition of $A_{n,\eps}$. If $p_{n,\eps}$ is none of the points $x,y,z,p$, then (b) is inherited from $Y_{n,\eps}$. It remains to consider the case $p_{n,\eps} = x$. Then $x$ has distance $c\eps$ to all circumspheres of nearby triples with radius at most $2\eps$. We now assume that the circumball $\overline{B(q,r)}$ of $[x,y,z]$ has radius $r \leq \eps$ and that some point $p\in Y_{n,\eps}$ is close to $\partial B(q,r)$. To this end, define
\[
\eta \coloneqq \frac{\left||p-q| - r \right|}{\eps}\,.
\]

We show that $\eta \geq \eta_0$ for some universal constant. To this end, we set

\[
  p_t \coloneqq (1-t)p + t\left(q+r\frac{p-q}{|p-q|}\right)
\]
(see  Figure \ref{fig:pt}) and note that if $\eta\leq \eta_0$, all triangles $[y,z,p_t]$ are uniformly regular.

\begin{figure}[h]
  \centering
  \includegraphics[height=5cm]{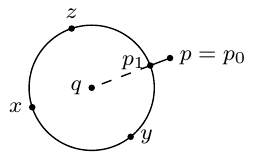}
  \caption{The definition of $p_t$ as linear interpolation between $p_0$ and $p_1$. \label{fig:pt}}
\end{figure}

Define the circumcenters $q_t \coloneqq q(y,z,p_t)$, and note that $q_1 = q$. By Lemma \ref{lma: circumcenter regularity}, we have $|q_1 - q_0| \leq C|p_1 - p_0| = C\eta \eps$ if $\eta\leq \eta_0$. Thus the circumradius of $[y,z,p_0]$ is bounded by
\[
  |y-q_0| \leq |y-q| + |q-q_0| \leq (1+C\eta)\eps \leq 2\eps
\]
if $\eta\leq \eta_0$. Because $x\in Y_{n+1,\eps} \setminus Y_{n,\eps} \subset A_{n,\eps}$, we have, using \eqref{eq:21},
\[
c\eps \leq \left| |x-q_0| - |p-q_0|\right| \leq \left| |x-q| - |p-q| \right| + 2 |q - q_0| \leq (1+2C)\eta\eps,
\]
i.e. that $\eta \geq \frac{c}{1+2C}$. This shows (b).

Since $M_h$ is compact, this construction eventually terminates, resulting in a set $X_\eps := Y_{N(\eps),\eps} \subset M_h$ with the properties (a), (b), and $D(X_\eps,M) \leq \eps$.

\medskip

Consider a Lipschitz function  $g:M_h\to \R$. Since  $M_h$ is a $C^2$ surface, we have that for $\|g\|_{W^{1,\infty}}$ small enough, $(M_h)_g$ is locally a tangent Lipschitz graph over $M$, see Definition \ref{def:Mgraph} (iii). By Lemma \ref{lma: graph property}, this implies that $(M_h)_g$ is a graph over $M$.

Invoking Proposition \ref{prop: Delaunay existence} yields a Delaunay triangulated surface $\cT_\eps \coloneqq \cT(X_\eps, M_h)$ with vertex set $X_\eps$ that is $\zeta_0$-regular for some $\zeta_0>0$, and $\bigcup_{K\in \cT_\eps} = (M_h)_{g_\eps}$ with $\|g_\eps\|_{W^{1,\infty}}\leq C(\delta)\eps$.

By the above, there exist Lipschitz functions $h_\e:M\to \R$ such that
$(M_h)_{g_\eps} = M_{h_\eps}$, with $h_\eps \to h$ in $W^{1,\infty}$, $\|h_\eps\|_\infty \leq \frac{\delta(M)}{2}$ and $\|\nabla h_\e\|\leq \|\nabla h\|_{\infty}+1$.

\medskip

It remains to estimate the energy. To do so, we look at the two types of interfaces appearing in the sum
\[
 \sum_{K,L\in \cT_\eps} \frac{l_{KL}}{d_{KL}} |n(K) - n(L)|^2.
\]

First, we look at pairwise interactions where $K,L\in \cT(X_{i,\eps})$ for some $i$. These are bounded by \eqref{eq: sum local interactions}.

Next, we note that if $\eps < \min_{i\neq j \in I} \dist(B_i,B_j)$, it is impossible for $X_{i,\eps}$ and $X_{j,\eps}$, $i\neq j$, to interact.

Finally, we consider all interactions of neighboring triangles $K,L\in \cT_\eps$ where at least one vertex is not in $Y_{0,\eps}$. By \eqref{eq:21}, these pairs all satisfy $\frac{l_{KL}}{d_{KL}} \leq C$ for some universal constant $C$ independent of $\eps,\delta$, and $|n(K) - n(L)|\leq C\eps$ because $\cT$ is $\zeta_0$-regular and $M_h$ is $C^2$. Further, no points were added inside any $B_I$. Thus
\[
  \begin{split}
  \sum_{\substack{K,L\in \cT_\eps\,:\,\text{at least}\\\text{ one vertex is not in }Y_{0,\eps}}}& \frac{l_{KL}}{d_{KL}} |n(K) - n(L)|^2 \\
  &\leq C\Hm^2\left(M_h \setminus \bigcup_{i\in I}B(x_i, r_i - 2\eps)\right)\\
  &\leq C \delta + C(\delta)\eps.
\end{split}
\]

Choosing an appropriate diagonal sequence $\delta(\eps) \to 0$ yields a sequence $\cT_\eps = M_{h_\eps}$ with $h_\e\to h$ in  $W^{1,\infty}(M)$ with
\[
\limsup_{\eps \to 0} \sum_{K,L\in \cT_\eps} \frac{l_{KL}}{d_{KL}} |n(K) -n(L)|^2 \leq \int_{M_h} |Dn_{M_h}|^2\,d\Hm^2.
\]
\end{proof}

\section{Necessity of the Delaunay property}
\label{sec:necess-dela-prop}

We now show that without the Delaunay condition, it is possible to achieve a lower energy. In contrast to the preceding sections, we are going to choose an underlying manifold $M$ with boundary (the ``hollow cylinder'' $S^1\times[-1,1]$). By ``capping off'' the hollow cylinder one can construct a counterexample to the lower bound in Theorem \ref{thm:main}, where it is assumed that  $M$ is  compact without boundary.

\begin{prop}\label{prop: optimal grid}
Let $M =S^1\times[-1,1] \subset \R^3$ be a
hollow cylinder and $\zeta>0$. Then there are $\zeta$-regular triangulated
surfaces $\cT_j\subset \R^3$ with $\size(\cT_j) \to 0$ and $\cT_j \to M$ for
$j\to\infty$ with
\[
\limsup_{j\to\infty} \sum_{K,L\in \cT_j} \frac{\l{K}{L}}{d_{KL}}
|n(K)-n(L)|^2 < c(\zeta) \int_M |Dn_M|^2\,d\H^2\,,
\]
where the positive constant $c(\zeta)$ satisfies
\[
  c(\zeta)\to 0 \quad \text{ for } \quad\zeta\to 0\,.
  \]
\end{prop}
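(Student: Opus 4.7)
I plan to exhibit, for each fixed $\zeta>0$, a sequence of $\zeta$-regular non-Delaunay triangulations $\cT_j$ of the hollow cylinder $M$ whose energies tend to $c(\zeta)\cdot \int_M |Dn_M|^2\,\d\H^2$ with $c(\zeta)=O(\zeta^2)$, much less than $1$ for $\zeta$ small. The underlying geometric fact is that a very obtuse isoceles triangle with apex angle $\pi-2\zeta$ has its circumcenter at signed distance $-R$ from the base midpoint along the perpendicular bisector of the base, where $R=b/(2\sin 2\zeta)$ and $b$ is the base length, lying on the side \emph{opposite} to the apex; two such triangles sharing their common base therefore have circumcenters at mutual distance $2R\gg b$, yielding a ratio $l_{KL}/d_{KL}=\sin(2\zeta)=O(\zeta)$ that Delaunay would forbid.

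Concretely, for each $j$ choose $L_j\to\infty$, set $\alpha_j$ so that $2L_j\sin(\alpha_j/2)=\tan\zeta$, take $N_j$ to be the nearest integer to $2\pi/\alpha_j$ (with subsequent $O(1/L_j)$ adjustment of $\alpha_j$), and $b_j=2/L_j$. Partition $M$ into angular strips $S_k=\{\theta_k\le\theta\le\theta_{k+1}\}$ with $\theta_k=k\alpha_j$. Place vertices on $\{\theta=\theta_k\}$ at axial heights $\{-1+lb_j:0\le l\le L_j\}$ for $k$ even and $\{-1+(l+\tfrac12)b_j:0\le l\le L_j-1\}$ for $k$ odd. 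Triangulate each strip with the resulting zigzag of isoceles triangles having base of length $b_j$ on one boundary and apex at the shifted mid-height vertex on the opposite boundary. By construction $\tan\zeta=4\sin(\alpha_j/2)/b_j$, which is exactly the tangent of the base angle of every such triangle; hence $\cT_j$ is $\zeta$-regular. We also have $\size\cT_j=b_j\to 0$ and, by a standard sagitta estimate, $\|h_j\|_{W^{1,\infty}(M)}=O(\alpha_j)\to 0$, so $\cT_j\to M$ in the required sense.

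Adjacent pairs in $\cT_j$ split into two classes. \emph{(i)} Two triangles in the same strip sharing a diagonal edge: a direct cross-product computation shows both outward unit normals agree to leading order with $(\cos(\theta_k+\alpha_j/2),\sin(\theta_k+\alpha_j/2),0)$, so the dihedral angle is $O(\alpha_j^2)$ and, summed over the $O(N_jL_j)$ such edges, contributes $o(1)$ to the energy. \emph{(ii)} Two triangles in adjacent strips sharing an axial edge of length $b_j$ at some $\theta_k$, with apices at $\theta_k\pm\alpha_j$ on opposite angular sides of the shared base: the two circumcenters lie at opposite tangential displacements of magnitude $R=b_j/(2\sin 2\zeta)$ from the base midpoint, so $d_{KL}=2R$; the two outward normals, at angles $\theta_k\pm\alpha_j/2$, differ by a rotation through $\alpha_j$, giving $|n(K)-n(L)|^2=\alpha_j^2(1+o(1))$; hence each such edge contributes
\[
\frac{l_{KL}}{d_{KL}}|n(K)-n(L)|^2=\sin(2\zeta)\,\alpha_j^2(1+o(1)).
\]
Summing over the $N_jL_j+O(N_j)$ axial edges and using $N_j\alpha_j\to 2\pi$ and $L_j\alpha_j\to\tan\zeta$ yields $E(\cT_j)\to 2\pi\sin(2\zeta)\tan\zeta=c(\zeta)\cdot\int_M|Dn_M|^2\,\d\H^2$ with $c(\zeta)=\sin(2\zeta)\tan\zeta/2=\zeta^2+O(\zeta^4)\to 0$ as $\zeta\to 0$. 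The Delaunay property fails because the opposite-side apex lies at distance $R-2\sin(\alpha_j/2)<R$ from the first triangle's circumcenter.

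The main obstacle is verifying that the leading-order cancellation of intra-strip normals in class (i) is strong enough that the cumulative $O(\alpha_j^2)$ dihedral remainder stays $o(1)$ after summation over $O(N_jL_j)$ edges; this requires carrying one extra order in the Taylor expansion of the cross product, exploiting the near-mirror symmetry of consecutive triangles within a strip. A minor secondary issue is the treatment of the boundary triangles at $z=\pm 1$, which is trivial since the edges on $\{z=\pm 1\}$ are unshared and so play no role in the energy sum.
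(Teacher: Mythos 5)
Your construction is essentially the one in the paper: a flat triangulation of $[0,2\pi]\times[-1,1]$ by long, thin isosceles triangles whose short (length-$b$) side is axial, wrapped onto the cylinder, so that the only edges carrying energy are the shared axial edges, where the prefactor $l_{KL}/d_{KL}$ is made small of order $\zeta$ precisely because the two circumcenters of an obtuse (non-Delaunay) pair repel far across the base. The paper parametrizes by an aspect ratio $s$ and a mesh size $\eps$ and estimates $E\lesssim s^2$; you instead fix the base angle to be exactly $\zeta$ and track the constant, arriving at $c(\zeta)\sim\zeta^2$. Same mechanism, same conclusion.

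Two remarks. First, the ``main obstacle'' you flag in case (i) is not there: two triangles of the same angular strip sharing a diagonal edge have all six vertices lying on the two vertical lines $\{\theta=\theta_k\}$ and $\{\theta=\theta_{k+1}\}$, hence both triangles lie in the \emph{same} vertical plane through the chord joining $(\cos\theta_k,\sin\theta_k)$ to $(\cos\theta_{k+1},\sin\theta_{k+1})$. Their normals agree exactly (the paper records this: ``the normal is the same \ldots\ for all triangles bordering $K$ diagonally''), so the intra-strip contribution is identically zero, with no higher-order Taylor term to control. Second, a small constant slip in case (ii): $b/(2\sin 2\zeta)$ is the circumradius $r(K)$; the distance from the base midpoint to the circumcenter along the perpendicular bisector of the base is $(b/2)\cot(2\zeta)$, so $d_{KL}\approx b\cot(2\zeta)$ and $l_{KL}/d_{KL}\to\tan(2\zeta)$ rather than $\sin(2\zeta)$. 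This changes only the explicit constant and not the fact that $c(\zeta)=O(\zeta^2)\to 0$, nor the Delaunay-violation check, which still goes through since $(b/2)\cot(2\zeta)-2\sin(\alpha_j/2)<b/(2\sin 2\zeta)=r(K)$.
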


\begin{figure}
\includegraphics[height=5cm]{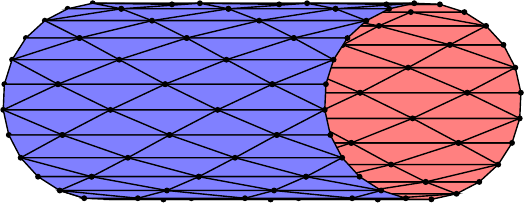}
\caption{A non-Delaunay triangulated cylinder achieving a low energy . \label{fig:cylinder}} 
\end{figure}

\begin{proof}
For every $\eps = 2^{-j}$ and $s\in\{2\pi j^{-1}:j=3,4,5,\dots\}$, we
define a flat triangulated surface $\cT_j\subset \R^2$ with
$\size(\cT_j) \leq \e$ as follows: As manifolds with boundary, $\cT_j=[0,2\pi]\times
[-1,1]$ for all $j$; all  triangles are isosceles,
with one side a translation of $[0,\eps]e_2$ and height $s\eps$ in
$e_1$-direction. We neglect the triangles close to the boundary
$[0,2\pi]\times\{\pm 1\}$, and leave it to
the reader to verify that their contribution will be negligeable in the end. 

\medskip

We then wrap this triangulated surface around the cylinder, mapping the
corners of triangles onto the surface of the cylinder via  $(\theta,t) \mapsto
(\cos\theta, \sin\theta, t)$, to obtain a triangulated surface $\tilde \cT_j$.
Obviously, the topology of $\tilde \cT_j$ is $S^1\times[-1,1]$. 

Then we may estimate all terms $\frac{\l{K}{L}}{d_{KL}} |n(K) - n(L)|^2$. We
first find the normal of the reference triangle $K\in \tilde \cT_j$ spanned by the points $x = (1,0,0)$, $y = (1,0,\eps)$, and $z = (\cos(s\eps),\sin(s\eps),\eps/2)$. We note that
\[
n(K) = \frac{(y-x) \times (z-x)}{|(y-x) \times (z-x)|} = \frac{(-s\eps\sin(s\eps), s\eps(\cos(s\eps)-1),0)}{s\eps(2-2\cos(s\eps))} = (1,0,0) + O(s\eps).
\]

We note that the normal is the same for all translations $K+te_3$ and for all triangles bordering $K$ diagonally. The horizontal neighbor $L$ also has $n(L) = (1,0,0) + O(s\eps)$. However, we note that the dimensionless prefactor satisfies $\frac{\l{K}{L}}{d_{KL}} \leq \frac{2\eps}{\eps/s} = s$. Summing up the $O(s^{-1}\eps^{-2})$ contributions yields
\[
\sum_{K,L\in \cT_j} \frac{\l{K}{L}}{d_{KL}} |n(K) - n(L)|^2 \leq C \frac{s^3\eps^2}{s\eps^2} = Cs^2.
\]

This holds provided that $\eps$ is small enough. Letting $s\to 0$, we see that this energy is arbitrarily small.
\end{proof}

\bibliographystyle{alpha}
\bibliography{triangulations}
\end{document}